\numberwithin{equation}{section}
\newcommand{\beq}{\begin{equation}}
\newcommand{\eeq}{\end{equation}}
\newcommand{\beqs}{\begin{eqnarray*}}
\newcommand{\eeqs}{\end{eqnarray*}}
\newcommand{\beqn}{\begin{eqnarray}}
\newcommand{\eeqn}{\end{eqnarray}}
\newcommand{\beqa}{\begin{array}}
\newcommand{\eeqa}{\end{array}}
\def\p{\partial }
\def\R{\Bbb R}
\def\Om{\Omega}
\def\pom{\p  \Omega}
\newtheorem{Proposition}{Proposition}[section]
\newtheorem{Theorem}[Proposition]{Theorem}
\newtheorem{Lemma}[Proposition]{Lemma}
\title  { Global Regularity for  minimal graphs over convex domains in hyperbolic space}\thanks{This work was supported by NSFC   11771237.}
\begin{document}

\address{Huaiyu Jian: Department of Mathematics, Tsinghua University, Beijing 100084, China.}

\address{You Li:Department of Mathematics, Tsinghua University, Beijing 100084, China.}



\email{hjian@tsinghua.edu.cn£¬ you-li16@mails.tsinghua.edu.cn. }


\bibliographystyle{plain}

\maketitle

\baselineskip=15.8pt
\parskip=3pt

\centerline {\bf  Huaiyu Jian}
\centerline {Department of Mathematics, Tsinghua University}
\centerline {Beijing 100084, China}
 \centerline{( Email: hjian@tsinghua.edu.cn\ \ Tel: 86-10-62772864)}

\vskip10pt

\centerline { \bf You Li}
\centerline {Department of Mathematics, Tsinghua University}
\centerline {Beijing 100084, China}

\vskip20pt

\noindent {\bf Abstract}:
In this paper we study the global regularity
for the solution to the Dirichlet problem of the equation  of minimal graphs over a convex domain in hyperbolic spaces.
 We find that the global regularity   depends only on the convexity of the domain but
 independent of its smoothness.
 Basing on   the  invariance of the problem under translation and rotation transforms,
 we construct the super-solution to the problem, by which we prove the optimal and  accurate global regularity
 for this problem.


 \vskip20pt

\noindent {\bf AMS Mathematics Subject Classification}:  35J93,  35B65, 35J25.

\vskip20pt

\noindent {\bf  Running head}: Regularity for  Minimal Graphs

\vskip20pt

\baselineskip=15.8pt
\parskip=3pt

\newpage

\centerline {\bf     Global Regularity for   Minimal Graphs over convex domains in Hyperbolic Space }

 \vskip10pt

\centerline { Huaiyu Jian\ \ \ \ You Li}

\maketitle

\baselineskip=15.8pt
\parskip=3.0pt

\section { Introduction}

  In this paper we consider  the Dirichlet problem
\beq \label {1.1}
\begin{split}
\Delta u-\frac{u_{i}u_{j}}{1+|\nabla u|^{2}}u_{ij}+\frac{n}{u}&= 0\ \  \text{in}\ \Om,\\
   u&=0\ \ \text{on}\ \pom,\\
   u&>0\ \  \text{in}\ \Om,
\end{split}
\eeq
where $\Om$ is a bounded   domain in $\R^n$ ($n\geq 2$).  In \cite{[L1]}, Lin
proved that problem (1.1) admits a unique solution $u\in C(\overline{\Om})\bigcap C^{\infty}(\Om)$ if
$\partial \Om\in C^2$ and $H_{\partial \Om}\geq 0$, where $H_{\partial \Om} $ denotes the mean curvature of
$ \partial \Om $ with respect to the inward normal direction. Furthermore, he proved that $u\in C^{\frac{1}{2}}(\overline{\Om}) $ if
$\partial \Om\in C^2$ and $H_{\partial \Om}>0$.  Anderson \cite{[A1], [A2]},
Hardt and Lin \cite{[HL]}, and Lin \cite{[L2]} studied the various problems related the complete area-minimizing submanifolds in hyperbolic spaces.
These results were partially extended by Tonegawa \cite{[T]} and Guan-Spruck \cite{[GS]} to the constant mean curvature case.

Recently, Han, Shen and Wang in \cite{[HSW]} studied the  optimal regularity for problem (1.1) and proved the following theorem. See Theorems 1.2 and  3.1
in \cite{[HSW]}.

\begin{Theorem} \label {1.1}
  Assume that $n\geq 2$, and $\Om\subseteq R^n$ is a bounded convex domain.  Then we have

  {\bf (i)}\,  (1.1) admits
a unique solution $u\in C^{\frac{1}{n+1}}(\overline{\Om})\bigcap C^{\infty}(\Om)$,   $u$ is concave,  and
 $$[u]_{C^{\frac{1}{n+1}}(\overline{\Om})}\leq[(n+1)d_{\Om}^{n}]^{\frac{1}{n+1}},$$
 where $d_{\Om}$  denotes the diameter of $\Om$;

  {\bf (ii)}\, $u\in C^{\frac{1}{2}}(\overline{\Om})$ if $\Om$ is the intersections of finitely many bounded convex $C^2$-domains
  $\Om_i$ with $H_{\partial \Om_i} >0$.
\end{Theorem}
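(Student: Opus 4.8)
The plan is to obtain every ingredient from the two families of explicit solutions that the problem's invariances generate. Translations and rotations in $x$, together with the hyperbolic dilation $(x,u)\mapsto(\lambda x,\lambda u)$, preserve (1.1); the totally geodesic hemispheres $h_{z,R}(x)=\sqrt{R^2-|x-z|^2}$ over balls $B_R(z)$ are exact solutions, and so is the one-dimensional ``minimal profile'' $\Phi=\Phi(x_n)$ obtained by separation of variables. First I would settle existence and interior regularity by approximation: exhaust $\Om$ by smooth, uniformly convex domains $\Om_k\uparrow\Om$ (so $H_{\partial\Om_k}>0$), to which Lin's theorem applies and yields unique $u_k\in C^{1/2}(\overline{\Om_k})\cap C^\infty(\Om_k)$. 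Enclosing $\Om$ in a ball $B_{d_\Om}(p)$ and comparing with $h_{p,d_\Om}$ gives the uniform bound $0<u_k\le d_\Om$; comparison is legitimate because $n/u$ is decreasing in $u$, so the operator is proper and subsolutions lie below supersolutions. Comparing from inside with hemispheres over fixed interior balls gives uniform positive lower bounds on compact subsets. Since each $u_k$ is concave (next paragraph), $|\nabla u_k(x)|\le d_\Om/\mathrm{dist}(x,\partial\Om_k)$, so the equation is uniformly elliptic with smooth coefficients on compact subsets; Schauder/Krylov--Safonov bootstrapping then gives $k$-independent interior $C^\infty$ bounds, whence $u_k\to u$ in $C^\infty_{loc}(\Om)$ with $u$ solving (1.1). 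Uniqueness follows at once from the comparison principle.

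\textbf{Concavity.} On a smooth uniformly convex domain each $u_k$ is concave by the concavity (convexity) maximum principle of Korevaar--Kennington--Kawohl, applicable here because (1.1) carries no explicit $x$-dependence and its zeroth-order term $n/u$ enters with the sign keeping the operator proper; the constant-rank theorem of Caffarelli--Guan--Ma gives an alternative route, and the hemispheric model solutions (for which $u$ and $u^2$ are both concave) show the result is sharp. Because concavity is preserved under local uniform convergence, the limit $u$ is concave on $\Om$.

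\textbf{Boundary barrier and global seminorm.} For the boundary estimate, let $w\in\pom$ be the point nearest to an interior $x$; by convexity the supporting hyperplane $H_w$ at $w$ is orthogonal to the segment $xw$, so $\mathrm{dist}(x,H_w)=\mathrm{dist}(x,\pom)=:d(x)$ and $\Om$ lies on one side of $H_w$. Choosing coordinates with $H_w=\{x_n=0\}$ and $\Om\subset\{x_n>0\}$, I use as upper barrier the explicit minimal profile
\[
\frac{\Phi''}{1+(\Phi')^2}+\frac n\Phi=0,\qquad \Phi(0)=0,
\]
taken of the least height for which it still spans the normal extent of $\Om$ and therefore dominates $u\le d_\Om$ on the far face of the relevant slab. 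Its first integral $1+(\Phi')^2=(\Phi_{\max}/\Phi)^{2n}$ yields $\Phi^{\,n}\Phi'\le\Phi_{\max}^{\,n}$, hence $\Phi(t)\le[(n+1)\Phi_{\max}^{\,n}\,t]^{1/(n+1)}$ on integration. Comparison ($\Phi\ge0=u$ on $\pom$, both solving the equation) then gives $u(x)\le\Phi(x_n)\le[(n+1)d_\Om^{\,n}]^{1/(n+1)}\,d(x)^{1/(n+1)}$. Finally I would promote this boundary decay to the global seminorm: for a concave nonnegative function vanishing on $\pom$ the quotient $|u(x)-u(y)|/|x-y|^{1/(n+1)}$ is maximized when one point lies on $\pom$, so the displayed decay delivers exactly $[u]_{C^{1/(n+1)}(\overline{\Om})}\le[(n+1)d_\Om^{\,n}]^{1/(n+1)}$.

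\textbf{Part (ii).} When $\Om=\bigcap_i\Om_i$ with each $\Om_i$ a bounded convex $C^2$ domain with $H_{\partial\Om_i}>0$, Lin's theorem provides $v_i\in C^{1/2}(\overline{\Om_i})\cap C^\infty(\Om_i)$. Since $\Om\subseteq\Om_i$ and $v_i\ge0=u$ on $\pom$, comparison gives $u\le v_i$ on $\Om$ for every $i$. As $\pom\subseteq\bigcup_i\partial\Om_i$, the nearest boundary point $w$ of $x$ lies on some $\partial\Om_i$, and using $v_i(w)=0$ yields $u(x)\le v_i(x)=|v_i(x)-v_i(w)|\le[v_i]_{C^{1/2}}\,d(x)^{1/2}$; with $C=\max_i[v_i]_{C^{1/2}(\overline{\Om_i})}$ this is $u(x)\le C\,d(x)^{1/2}$, and the same concavity argument promotes it to $u\in C^{1/2}(\overline{\Om})$. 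I expect the main obstacles to be exactly the two conceptual cores: establishing concavity, the one step where a genuine maximum-principle or constant-rank argument (with its structural hypotheses verified) is unavoidable; and calibrating the profile's height against the normal width of $\Om$ so that the barrier produces the sharp constant $[(n+1)d_\Om^{\,n}]^{1/(n+1)}$ rather than a lossy multiple. Everything else (approximation, comparison, interior bootstrapping) is routine once these are in hand.
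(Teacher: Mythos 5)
First, note how this statement sits in the paper: Theorem 1.1 is not proved here at all --- it is quoted from Han--Shen--Wang \cite{[HSW]} --- and the paper's own contribution in its direction is Theorem 3.2, which re-derives the H\"older estimate of part (i) (with the weaker constant $2(n+1)^2d_{\Om}^{\frac{1}{n+1}}$ in place of $[(n+1)d_{\Om}^{n}]^{\frac{1}{n+1}}$) by comparison with the explicit supersolution $U(x)=(n+1)^2x_n^{\frac{1}{n+1}}-x_n^{2-\frac{1}{n+1}}$, taking existence, uniqueness and concavity as given from Theorem 1.1. Measured against that, much of your proposal is correct and actually sharper than the paper's own argument: the one-dimensional profile with first integral $1+(\Phi')^2=(\Phi_{\max}/\Phi)^{2n}$, hence $\Phi(t)\le[(n+1)\Phi_{\max}^{n}t]^{\frac{1}{n+1}}$, is the right barrier for the sharp constant; your promotion of boundary decay to the seminorm via concavity is exactly the mechanism of the paper's Lemma 2.4; and your proof of (ii), comparing $u$ with Lin's solutions $v_i$ on the larger domains $\Om_i$ and using $\pom\subseteq\bigcup_i\partial\Om_i$, is clean and correct --- indeed the paper's own $C^{\frac12}$ result (Theorem 2.6) needs an exterior sphere condition and, as the authors remark, does not cover (ii) when $n\geq 3$. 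The barrier step does need a repair you only gesture at: the comparison can be run only on $\Om\cap\{0<x_n<T\}$, where $T$ is the point at which $\Phi$ reaches $\Phi_{\max}=d_{\Om}$ (on the far face $\{x_n=T\}$ you use $u\le d_{\Om}$, from the hemisphere over an enclosing ball), and points with $d(x)\ge T$ must then be handled by the trivial estimate $u(x)\le d_{\Om}\le[(n+1)d_{\Om}^{n}d(x)]^{\frac{1}{n+1}}$; this is routine but missing.

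The genuine gap is concavity, and it is load-bearing: your seminorm promotion, your interior gradient bounds, and part (ii) all rest on it. Invoking Korevaar--Kennington--Kawohl ``because the zeroth-order term keeps the operator proper'' does not work: properness (monotonicity of $n/u$ in $u$) yields the comparison principle, not concavity, and the structural hypothesis those theorems actually require fails for this nonlinearity. Concretely, write the equation as $a^{ij}(Du)u_{ij}=-n/u$ with $a^{ij}(p)=\delta_{ij}-\frac{p_ip_j}{1+|p|^2}$. If the concavity function $\mathcal{C}(x,y)=\tfrac12u(x)+\tfrac12u(y)-u(z)$, $z=\tfrac{x+y}{2}$, had a positive interior maximum, the three gradients there would coincide, say equal to $p$, and the second-order test would give $a^{ij}(p)\bigl(\tfrac12u_{ij}(x)+\tfrac12u_{ij}(y)-u_{ij}(z)\bigr)\le 0$, i.e. $\frac{n}{u(z)}\le\frac12\bigl(\frac{n}{u(x)}+\frac{n}{u(y)}\bigr)$. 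Since $u(z)<\frac12\bigl(u(x)+u(y)\bigr)$ and $s\mapsto n/s$ is decreasing, a contradiction would follow only if $n/s$ were \emph{concave}; but $n/s$ is convex, so the chain of inequalities is perfectly consistent and the maximum-principle argument does not close. (This is the same structural failure that makes first Dirichlet eigenfunctions non-concave on convex domains; under such hypotheses Kennington's theorems deliver only power concavity, e.g. of $\sqrt{u}$ or $\log u$, never concavity of $u$ itself.) Concavity of solutions of (1.1) on convex domains is a substantive result of \cite{[HSW]} obtained by a genuinely different argument; both this paper (which assumes ``the concave solution'' throughout) and your proposal need it as input, but your proposal claims to prove it and does not. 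Note also the resulting circularity as written: your uniform interior estimates for the approximating $u_k$ are derived from their concavity, so the gap propagates back into your existence step --- that part, at least, is repairable by standard interior gradient estimates for prescribed-mean-curvature-type equations.
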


In this paper we continue to study the global H$\ddot{o}$lder regularity  for  the concave solutions to  problem (1.1), using a method
different  from \cite{[HSW]}.
We find that the H$\ddot{o}$lder exponent of the concave solution is independent of the smoothness of $\Om$ but depends on
the convexity of the domain. This fact was found first by us in \cite{[JL]} for a class of singular Monge-Ampere equations.
   To describe the convexity,
we  use the concept of  $(a, \eta)$- type for bounded convex domain introduced in \cite{[JL]}.

 From now on,  denote
 $$x'=(x_{1},...,x_{n-1}), \ \  |x'|=\sqrt{x_{1}^{2}+...+x_{n-1}^{2}}.$$

 \noindent{\bf  Definition 1.1}\cite{[JL]}. {\sl
Let $\Om\subseteq R^n$ be a bounded convex domain,   $x_{0}\in \pom$, $a\in[1,+\infty)$ and  $\eta>0$.    We say $x_{0}$ is $(a, \eta)$- type point if by   translation and rotation transforms, we have
  \beq \label{1.2} x_0=0, \ \  \Om\subseteq \{(x, x_n)\in R^n | x_n \geq\eta|x'|^{a} \}.\eeq
    $\Om$  is called as $(a, \eta)$- type domain if  for all $x\in\pom$,  $x$ is $(a, \eta)$- type point;
     $\Om$  is called as $(+\infty, \eta)$- type domain if   it is not  $(a, \eta)$- type for any $a\geq 1$ and any $\eta>0$. }

Obviously,  a domain which boundary   contains
pieces of (flat) hyperplane is $(+\infty, \eta)$- type.

\noindent{\bf Remark 1.1}. The convexity requires that the number $a$ should be no less than 1. Since $\Om $  is   bounded and convex, we see that
it is sufficient (1.2) holds true only for all $|x'|<\mu$ and some small $\mu>0$, and that if $a_1>a_2$,  $(a_2, \eta_2)$- type domains
are also $(a_1, \eta_1)$- type for some $\eta_1=C(d_{\Om}, n, a_1, a_2, \eta_2)>0$, as the well-known fact that a $C^{k_1}$-domain is also $C^{k_2}$-domain
if $k_1>k_2$. Finally,  there is no $(a, \eta)$ type domain for $a\in (1, 2)$, although the boundary of a convex domain may contains  $(a, \eta)$-  type points.

\vskip 0.5cm
The main result of this paper is stated as the following theorem.

\begin{Theorem} \label {1.2}  Suppose that $\Om\subseteq R^n$ is a bounded convex domain and  $u\in C(\overline{\Om})\bigcap C^{\infty}(\Om)$  is the concave solution to  problem (1.1).
  If $\Omega$ is $(a, \eta)$- type domain with $a\in [2,+\infty]$,
then $u\in C^{\bar a}(\overline{\Omega})$
and
\beq \label{1.3} |u|_{C^{\bar a}(\overline{\Omega})}\leq C ,\eeq
 where $\bar a =max\{ \frac{1}{a}, \frac{1}{n+1}\}$ and $C=C(a, \eta,   d_{\Omega}, n)$ is a positive constant depending only on $a, \eta,  n$ and $d_{\Omega}$.
 In particular,
\beq \label{1.3} C=2\sqrt{2Rd_{\Om}} \, \text{when} \, a=2, \ \   C=2(n+1)^2d_{\Om}^{\frac{1}{n+1}} \, \text{when} \, a=+\infty,\eeq
where $R$ is the exterior sphere radius of $\Om$ (see Definition 2.2 below).

\end{Theorem}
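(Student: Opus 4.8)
The plan is to prove the bound by comparison with explicit barriers, after reducing the global estimate to a one‑point boundary estimate by concavity. Write
\[
F[v]=\Delta v-\frac{v_{i}v_{j}}{1+|\nabla v|^{2}}v_{ij}+\frac{n}{v}.
\]
Its principal part $a^{ij}(\nabla v)=\delta_{ij}-\frac{v_{i}v_{j}}{1+|\nabla v|^{2}}$ is positive definite and the zero order term $v\mapsto n/v$ is strictly decreasing, so I would first record the resulting comparison principle: if $w\in C(\bom)\cap C^{2}(\Om)$ satisfies $w>0$ in $\Om$, $F[w]\le 0$ in $\Om$ and $w\ge u$ on $\pom$, then $u\le w$ in $\Om$. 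Indeed, if $u-w$ had a positive maximum it would be attained at an interior point $p$ (since $u-w\le 0$ on $\pom$), where $\nabla u=\nabla w$ and $D^{2}(u-w)\le 0$, hence $a^{ij}(\nabla u)(u_{ij}-w_{ij})\le 0$ and $n/u<n/w$, giving $F[u]<F[w]\le 0$, which contradicts $F[u]=0$. Note that the gradient blow‑up of $u$ on $\pom$ plays no role, the extremum being interior.

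Next I would reduce (1.3) to the boundary decay estimate
\[
u(x)\le C_{0}\,|x-x_{0}|^{\bar a}\qquad(\ast)
\]
for every $x_{0}\in\pom$ and $x\in\bom$. Given $x,y\in\bom$ with $u(x)\ge u(y)$, prolong the segment $[x,y]$ until it meets $\pom$ at a point $z$ beyond $y$; since $u$ is concave and $u(z)=0$, monotonicity of the difference quotients gives $\frac{u(x)-u(y)}{|x-y|}\le\frac{u(y)}{|y-z|}$. Inserting $(\ast)$ at the boundary point $z$, namely $u(y)\le C_{0}|y-z|^{\bar a}$, and separating the cases $|x-y|\le|y-z|$ and $|x-y|>|y-z|$ (in the latter using $(\ast)$ at $z$ for $x$ together with $|x-z|<2|x-y|$), one obtains $u(x)-u(y)\le 2C_{0}|x-y|^{\bar a}$, because $\bar a\le 1$. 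This accounts for the factor $2$ in (1.3), while $\sup_{\Om}u\le C_{0}d_{\Om}^{\bar a}$ follows from $(\ast)$ directly.

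To prove $(\ast)$ I would, using the translation and rotation invariance of $F$, place $x_{0}=0$ with $\Om\subseteq\{x_{n}\ge\eta|x'|^{a}\}$, and use two families of exact solutions of (1.1) (so $F=0\le 0$), both $\ge 0=u$ on $\pom$. The first is the totally geodesic hemispheres $w_{p,\rho}(x)=\sqrt{\rho^{2}-|x-p|^{2}}$ over balls $B_{\rho}(p)\supseteq\Om$; a radial computation gives $F[w_{p,\rho}]\equiv 0$, so comparison yields $u\le w_{p,\rho}$ for every admissible $(p,\rho)$, hence $u(x)\le\inf\{w_{p,\rho}(x):\Om\subseteq B_{\rho}(p)\}$. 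For a target point near $0$ one selects a ball tangent to the supporting paraboloid $\{x_{n}=\eta|x'|^{a}\}$ at horizontal scale $\mu\sim(x_{n}/\eta)^{1/a}$; optimizing in $\mu$ produces the decay of exponent $1/a$, and for $a=2$, where $\eta=(2R)^{-1}$ with $R$ the exterior sphere radius, the constrained optimization gives the constant $2\sqrt{2Rd_{\Om}}$ of (1.3). The second family is the one–dimensional profile $w(x)=\psi(x_{n})$, where $\psi$ is the increasing solution of $\frac{\psi''}{1+\psi'^{2}}+\frac{n}{\psi}=0$, $\psi(0)=0$; then $F[\psi(x_{n})]\equiv 0$, and integrating the ODE shows $\psi(t)\sim c\,t^{1/(n+1)}$, giving exponent $1/(n+1)$ and, on evaluating at scale $d_{\Om}$, the stated constant for $a=+\infty$. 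Using the better of the two barriers at each boundary point produces $(\ast)$ with $\bar a=\max\{1/a,1/(n+1)\}$: the hemispheres dominate for $a\le n+1$ and the flat profile for $a\ge n+1$, exactly matching the transition in the exponent.

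The main obstacle is the hemisphere estimate: one must carry out the infimum over admissible containing balls uniformly for all $x$ near $x_{0}$, not merely along the axis, and with the sharp constant, balancing the local tangency to $\{x_{n}=\eta|x'|^{a}\}$ against the global requirement $\Om\subseteq B_{\rho}(p)$ forced by the diameter $d_{\Om}$; this geometric optimization is what yields exactly the exponent $1/a$ and the explicit dependence on $R$ and $d_{\Om}$. A secondary difficulty is the borderline nature of the profile in the second family: the most singular terms of $F[\psi(x_{n})]$ cancel precisely at exponent $1/(n+1)$, so a pure power $t^{1/(n+1)}$ is only a sub‑solution, and one must instead use the genuine ODE solution and control it on all of $[0,d_{\Om}]$.
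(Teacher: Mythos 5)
Your proposal is correct, and while its skeleton coincides with the paper's --- a comparison principle, reduction of the global H\"older bound to a boundary decay estimate via concavity (this is precisely the paper's Lemma 2.4, quoted from \cite{[JL]}), and barriers placed after a translation/rotation normalization --- your two key barrier constructions are genuinely different from the ones the paper uses. For $2<a<\infty$ the paper never optimizes over hemispheres: it verifies by a long direct computation that the explicit function $W=\bigl((x_n/\varepsilon)^{2/a}-|x'|^2\bigr)^{1/2}$ satisfies $F[W]\le 0$ once $\varepsilon$ is small (the outer exponent $1/2$, their $b=2$, is exactly what makes the dangerous terms cancel), hemispheres being used only in the case $a=2$ via the exterior sphere condition. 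Your alternative, $u(x)\le\inf\{\sqrt{\rho^2-|x-p|^2}:\Om\subseteq B_\rho(p)\}$, does yield the exponent $1/a$ for every $a\in[2,\infty)$: taking balls $B_{h+s}(he_n)$ centered on the axis and dipping a depth $s$ below the supporting hyperplane, containment of $\{x_n\ge\eta|x'|^a\}\cap B_{d_\Om}(0)$ forces $s\ge c(a,\eta)h^{-a/(a-2)}$, the value of such a hemisphere at any point of height $t$ is at most $\sqrt{3h(s+t)}$ (only the height enters, which settles the off-axis uniformity issue you flagged), and balancing $hs$ against $ht$ gives $h\sim t^{-(a-2)/a}$, hence decay $t^{1/a}$; this trades the paper's algebra for a geometric optimization, with the advantage of using only exact solutions rather than strict supersolutions. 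Likewise, for the exponent $1/(n+1)$ the paper uses the explicit supersolution $(n+1)^2x_n^{1/(n+1)}-x_n^{2-1/(n+1)}$ after rescaling to $d_\Om\le 1$, whereas your one-dimensional ODE profile also works: the first integral $1+\psi'^2=K\psi^{-2n}$ shows the increasing branch reaches height $c_nK^{1/(2n)}$, where $c_n=\int_0^1\sigma^n(1-\sigma^{2n})^{-1/2}\,d\sigma$, satisfies $\psi(t)\le\bigl((n+1)\sqrt K\,t\bigr)^{1/(n+1)}$, and is a genuine exact solution on all of $\Om$ once $K\ge (d_\Om/c_n)^{2n}$. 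The one overstatement is your claim that this route reproduces ``the stated constant'' for $a=+\infty$: it produces $2(n+1)^{1/(n+1)}(d_\Om/c_n)^{n/(n+1)}d_\Om^{1/(n+1)}$, which has the correct dependence on $n$ and $d_\Om$ but is not the quoted expression $2(n+1)^2d_\Om^{1/(n+1)}$; the particular constants in (1.4) are tied to the paper's particular barriers, so for that one clause you should either estimate $c_n$ explicitly or fall back on the paper's algebraic supersolution.
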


 We would like to point out that the H$\ddot{o}$lder exponent $\bar a$ can not be large than $\frac{1}{n+1}$ if  $\Omega$ is $(+\infty, \eta)$- type (see Remark
 2.3 in \cite{[HSW]} for a domain which boundary   contains
pieces of (flat) hyperplane ), and it   can not be large than $\frac{1}{2}$ if $\Om $ is $(2, \eta)$- typea (see Lemma 2.5 below for a ball). In this view, our
Theorem 1.2  should be optimal and accurate. Unfortunately, the case $a=2$ and $n\geq 3$ of Theorem 1.2 does not include the (ii) of Theorem 1.1, since the domain
$\Om$ described there is not necessarily $(2, \eta)$-type, although the it is  $(2, \eta)$-type when $n=2$. Interestingly, the problem (1.1) for $n=2$ also appears
in the study of two-dimensional Riemann problems of the Chaplying gas. See section 6 in \cite{[S]}.

  For certain points near a $(a, \eta)$ type boundary point for $a\in [1, 2)$, we have the following local estimates.

 \begin{Theorem} \label{1.3}
Suppose that $\Om\subseteq R^n $  is a bounded  convex domain,  $u$ is the concave solution to problem (1.1), and
  $x_{0}\in\pom$ is (a, $\eta$) type point with $a\in [1,2)$.  Without loss of generality, we assume
  \beq \label {1.4} x_{0}=0, \ \  \Om\subseteq \{(x,x_n)\in R^{n}|\,
  x_n \geq\eta|x'|^{a} \}.
 \eeq
  Then for any  $\delta >0$, there exists a positive constant $C=C(a, \eta,   d_{\Om}, n, \delta)$
such that
\beq \label{1.5} |u(x)|\leq C |x|^{\frac{1}{a+\delta}},\ \  \forall  x\in \Om\bigcap\{(x',x_n)\in R^n: x'=0\}.\eeq
\end{Theorem}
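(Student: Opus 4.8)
The plan is to bound $u$ on the axis from above by an explicit supersolution of (1.1) that is confined to the cusp and decays at the rate $|x|^{1/(a+\delta)}$, and then to invoke comparison. First I would write the operator in divergence form,
\[
Q[u]=\sqrt{1+|\nabla u|^2}\,\Big(\mathrm{div}\tfrac{\nabla u}{\sqrt{1+|\nabla u|^2}}+\tfrac{n}{u\sqrt{1+|\nabla u|^2}}\Big),
\]
and record the weak comparison principle for it: the diffusion is the monotone mean-curvature operator and the zeroth-order term $n/(z\sqrt{1+|p|^2})$ is strictly decreasing in $z>0$, so an ordered pair of sub/supersolutions with ordered boundary data stays ordered. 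I would fix a small $s_0>0$ and work in $\Omega\cap\{x_n<s_0\}$; on the lateral boundary $\partial\Omega\cap\{x_n<s_0\}$ one has $u=0$, while on the cap $\Omega\cap\{x_n=s_0\}$ one has $u\le M:=\max_{\overline\Omega}u$. It then suffices to produce a supersolution dominating these two boundary values whose value at $(0,\dots,0,t)$ is $\le C\,t^{1/(a+\delta)}$.

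The barrier I would use is adapted to the cusp width $w(s):=(s/\eta)^{1/a}$, which is exactly the transverse radius of $\{x_n\ge\eta|x'|^a\}$ at height $x_n=s$. Writing $\sigma:=|x'|/w(x_n)$, so that $\Omega\subseteq\{\sigma\le1\}$ near $0$, I set
\[
\bar u(x)=A\,x_n^{\,1/(a+\delta)}\,\Phi(\sigma),
\]
where $\Phi$ is a fixed smooth radial concave profile on $\{\sigma\le1\}$ with $\Phi\ge c_0>0$, $\Phi(0)=1$, $\Phi'(0)=0$, and $A$ is a large constant to be chosen. The point of this shape is a transverse balance: the transverse Laplacian contributes $\Delta_{x'}\bar u=A\,x_n^{1/(a+\delta)}w^{-2}\big(\Phi''+\tfrac{n-2}{\sigma}\Phi'\big)$, which is negative by concavity and, since $w^{-2}\sim x_n^{-2/a}$, of size $\sim x_n^{1/(a+\delta)-2/a}$; meanwhile $n/\bar u$ is positive of size $\sim x_n^{-1/(a+\delta)}$. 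Because $a<2$ forces $1/a>1/2$, and because $1/(a+\delta)<1/a$, the transverse term is the more singular as $x_n\to0$ and dominates with the favourable (negative) sign.

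To see that these are indeed the governing terms I would use the cancellation already visible in the flat model: along the cusp the longitudinal gradient $\partial_{x_n}\bar u\sim x_n^{1/(a+\delta)-1}$ blows up while the transverse gradient stays bounded, so in $Q[\bar u]$ the term $\frac{\bar u_i\bar u_j\bar u_{ij}}{1+|\nabla\bar u|^2}$ absorbs the longitudinal second derivative $\partial_{x_n}^2\bar u$ and leaves, to leading order, precisely $\Delta_{x'}\bar u+n/\bar u$. The exponent $1/(a+\delta)$ rather than the critical $1/a$ is chosen so that $\Delta_{x'}\bar u$ strictly dominates: near the tip $Q[\bar u]\le\tfrac12\Delta_{x'}\bar u<0$ with room to spare, which absorbs the genuine error terms produced by the $x_n$-dependence of $\sigma$ (the mixed Hessian entries and the contribution of $\partial_{x_n}\sigma=-\tfrac{\sigma}{a\,x_n}$). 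Choosing first $s_0$ small so that this differential inequality holds on $\Omega\cap\{x_n<s_0\}$, and then $A$ large so that $A c_0 s_0^{1/(a+\delta)}\ge M$ on the cap while $\bar u\ge0=u$ on the lateral boundary, comparison gives $u\le\bar u$ there; evaluating at $x'=0$ yields $u(0,\dots,0,t)\le A\,\Phi(0)\,t^{1/(a+\delta)}=A\,t^{1/(a+\delta)}$ for $t<s_0$, and the bound is trivial for $t\ge s_0$ after enlarging $C$. This is (1.5).

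The main obstacle is the rigorous verification of $Q[\bar u]\le0$ \emph{uniformly up to the degenerate wall} $\{\sigma=1\}$, where the ellipticity of $Q$ collapses and $|x'|^a$ is only Hölder at $x'=0$: the cancellation of $\partial_{x_n}^2\bar u$ is exact only in the flat model, and one must quantify the remainder coming from the bending of the level sets of $\sigma$ and show it is dominated by $\Delta_{x'}\bar u$. This is precisely what pins the threshold at $a<2$ and forces the loss $\delta>0$. A secondary point is to keep the supersolution property stable as $A$ is inflated to meet the cap and lateral conditions — here I would fix the shape $\Phi$ and the exponent first and only then enlarge $A$, since $\Delta_{x'}\bar u$ scales linearly in $A$ (more negative) while $n/\bar u$ decreases in $A$, so enlarging $A$ never spoils $Q[\bar u]\le0$. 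Finally I would note that this confinement argument controls $\bar u$ cheaply only on the axis, where $\Phi(0)=1$; away from it the crude profile is wasteful, which is exactly why Theorem 1.3 is stated as a pointwise estimate on $x'=0$ rather than a global H\"older bound.
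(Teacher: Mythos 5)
Your overall strategy --- a cusp-adapted supersolution plus the comparison principle, giving a pointwise bound on the axis --- is the same as the paper's, but your proposal has a genuine gap exactly where the proof has to live: the supersolution inequality $Q[\bar u]\le 0$ for your ansatz $\bar u=A\,x_n^{1/(a+\delta)}\Phi(\sigma)$ is never verified, and you yourself defer it as ``the main obstacle.'' Moreover, the power-counting heuristic you offer in its place cannot settle it. Write $\beta=1/(a+\delta)$, $r=|x'|$, $w=(x_n/\eta)^{1/a}$. Then $\bar u_r=A x_n^{\beta}\Phi'/w\sim A x_n^{\beta-1/a}\Phi'$, so (contrary to your claim) the transverse gradient is \emph{unbounded} near the wall, where $|\Phi'|\simeq 1$; also $\bar u_n\sim A x_n^{\beta-1}$, $\bar u_{rn}\sim A x_n^{\beta-1/a-1}$, and $1+|\nabla \bar u|^2\sim \bar u_n^2$. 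Consequently
\begin{equation*}
-\frac{2\,\bar u_r\bar u_n\bar u_{rn}}{1+|\nabla\bar u|^2}\;\sim\;-\frac{2\,\bar u_r\bar u_{rn}}{\bar u_n}\;\sim\;A\,x_n^{\beta-2/a},
\qquad
\frac{\bar u_{nn}\,\bar u_r^2}{1+|\nabla\bar u|^2}\;\sim\;A\,x_n^{\beta-2/a},
\end{equation*}
which is \emph{exactly} the order of your would-be dominant term $\Delta_{x'}\bar u\sim -A\,x_n^{\beta-2/a}$; and for a decreasing concave profile the mixed term has the unfavorable sign ($\bar u_r<0$, $\bar u_n>0$, $\bar u_{rn}>0$, so it contributes positively). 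The loss $\delta>0$ buys domination only over $n/\bar u$, which lives at the strictly smaller power $A^{-1}x_n^{-\beta}$; against the mixed and longitudinal remainders it buys nothing, because they carry the same power of $x_n$ as $\Delta_{x'}\bar u$. Whether their sum is negative is a competition of order-one coefficients depending on $\Phi$, $a$, $n$ --- precisely the computation you postpone. Hence your assertion that near the tip $Q[\bar u]\le\tfrac12\Delta_{x'}\bar u$ ``with room to spare'' is not justified by anything in the proposal, and without it there is no proof.

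For comparison, the paper closes exactly this step by a different, fully explicit construction. It first exploits the invariance of (1.1) under $x\mapsto Ax/d_{\Omega}$, $u\mapsto Au/d_{\Omega}$ to rescale so that the domain has small diameter $A$ and sits in the normalized cusp $\{|x'|^a\le x_n\le A\}$ (no cap truncation or $\max u$ is needed), and then takes the barrier $W=(x_n^{2/a}-|x'|^2)^{1/b}$ with $b\in(2,3)$ --- in your notation a profile $x_n^{2/(ab)}(1-\sigma^2)^{1/b}$ which \emph{vanishes} at the wall rather than staying bounded below. All eight terms of $(1+|\nabla W|^2)F[W]$ are computed; the dangerous same-order terms are absorbed because, after rescaling, every remaining coefficient carries a positive power of the small parameter $A$, while the one term with a fixed sign, $-8(n+a-3)/(a^2b^3)\,W^{3-3b}x_n^{4/a-2}$ (negative since $n\ge 2$, $a>1$), supplies the strict negativity. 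Evaluating on the axis gives $u(0,x_n)\le x_n^{2/(ab)}$, and letting $b$ range over $(2,3)$ yields (1.5) for every $\delta>0$. If you want to salvage your ansatz you must carry out the analogous coefficient computation for your fixed $\Phi$ (with $\Phi=1-\sigma^2/2$ it appears to close, but only after tracking the mixed term exactly); as written, the proposal is a plan with its central inequality unproved.
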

\vskip 0.5cm

This paper is arranged as follows. In Section 2 we   construct a class of non-smooth strictly convex domain which is $(2, \eta)$ type, and  prove Theorem 1.2 for the case of $a=2$.
 In Section 3 we   construct super-solutions   and    prove Theorem 1.2  for the case of $a\in (2,\infty]$.    In Section 4,  we  prove Theorem 1.3.
   We should point out that applying the invariance of problem (1.1)   under translation and rotation transforms to construct super-solutions  and using the convexity sufficiently is critical for our arguments, which are
    different from the  proof of Theorem 1.1 in  \cite{[HSW]}.

\vskip20pt

\baselineskip=15.6pt
\parskip=2.5pt

\section {The case $a=2$ }
For any $\widetilde{x}\in\pom$, a hyperplane  $ {\bf P}$  is called a generalized tangent hyperplane (supported hyperplane) at   $\widetilde{x}$ if all points of $\Om$ is on the same side
 of  $ {\bf P} $ and $\overline{\Om}\bigcap  {\bf P}\ni \{\widetilde{x}\}$. Obviously, tangent hyperplane at a boundary point is not necessarily unique, unless $\Om \in C^1$.  But we can choose a tangent hyperplane  $  {\bf P}$ such that  part of $\pom$ near the $\widetilde{x}$  is expressed by a function with
 respect to tangent variable $x'\in   {\bf P}$. We call such a function as  {\sl a tangent expression of $\pom$ near the $\widetilde{x}$}, denoted by $T_{\widetilde{x}}\Om(x')$.

 If $\Om\in C^2$ and $ \widetilde{x}\in\pom$,  the tangent plane at $ \widetilde{x} $ is unique which is denoted  by $T_{\widetilde{x}}\pom$, and the tangent expression  $T_{\widetilde{x}}\Om(x')$ is locally in  $C^2$.   We see that for  a bounded convex $C^2$- domain $\Om$,   there is a constant $ \lambda$  such that  for any $\widetilde{x}\in\pom$ and any tangent expression  $T_{\widetilde{x}}\Om(x')$
$$D_{\gamma\gamma}T_{\widetilde{x}}(\widetilde{x})\geq \lambda ,    \ \  \forall  \gamma\in B(0)\subset T_{\widetilde{x}}\pom .$$
Let {\sl
$\lambda(\pom)$ be the maximal number among all such $\lambda$.}

 We  are going  to construct  a class of non-smooth strictly convex domains which is $(2, \eta)$-type.
\vskip 0.5cm

\noindent{\bf  Definition 2.1}. {\sl If $\Om$ is  a bounded domain  and there is a $ \lambda>0$ and a  sequence of $C^2$ domains,   
$\{\Om_{i}\}_{i=1}^{\infty}$,  satisfying
$\Om_{i}\subseteq\Om_{i+1}, \  \lambda(\pom_{i})\geq\lambda \ \ \forall \,i $,  such that
$$ \Om=\bigcup_{i=1}^{\infty}\Om_{i} ,$$
then  $\Om$ is called as a   $\lambda$-convex domain.
  The maximal number among all such $\lambda$ is denoted by $\lambda(\pom)$.}

 Obviously, a  $\lambda$-convex domain $\Om$ may contain singular (angular) points.

\begin{Lemma} \label{2.1}
Assume $\Om$ is a  $\lambda$-convex domain and $x\in\pom$.
Without loss of generality (by translation or rotation transforms), we assume $x=0$ and $\Om\subseteq R_{+}^{n}:=\{(x,x_n)\in R^n|x_n>0\}$.
 Then $\Om\subseteq B_{R}(Re_{n})$, the ball centered in $Re_{n}$ with radius $R$,
where $R=\frac{1}{\lambda(\pom)}$ and $e_{n}=(0, 0, \cdots, 0, 1)$.
\end{Lemma}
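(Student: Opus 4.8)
The plan is to approximate $\Om$ from inside by the smooth domains of Definition 2.1 and to reduce the statement to a rolling-ball estimate for each $\Om_i$, then pass to the limit. Fix a sequence $\{\Om_i\}$ realizing $\lambda(\pom)$, so that each $\Om_i$ is a bounded convex $C^2$-domain with $\lambda(\pom_i)\ge\lambda(\pom)$, $\Om_i\subseteq\Om_{i+1}$ and $\Om=\bigcup_i\Om_i$; write $R_i=1/\lambda(\pom_i)\le R$. Since $\lambda(\pom_i)>0$, each $\Om_i$ is strictly convex, so its Gauss map is a diffeomorphism and there is a unique lowest point $q_i\in\pom_i$, i.e. the point of minimal $x_n$, whose outer unit normal is $-e_n$. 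The heart of the matter is the claim that $\Om_i\subseteq\overline{B_{R_i}(c_i)}$ with $c_i:=q_i+R_i e_n$, the ball of radius $R_i$ tangent to the horizontal supporting plane of $\Om_i$ at $q_i$. Granting this, any $y\in\Om$ lies in some $\Om_i$, so it suffices to control the balls $\overline{B_{R_i}(c_i)}$ in the limit.

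For the rolling estimate I would argue through the support function $h_i(\xi)=\sup_{y\in\Om_i}\langle y,\xi\rangle$, $\xi\in S^{n-1}$, which is $C^2$ because $\Om_i$ is $C^2$ and strictly convex. The principal radii of curvature of $\pom_i$ are the eigenvalues of $\nabla^2_{S}h_i+h_i g$ (covariant Hessian on the sphere plus $h_i$ times the metric $g$), so the hypothesis $\lambda(\pom_i)\ge 1/R_i$, i.e. all principal curvatures $\ge 1/R_i$, reads exactly as $\nabla^2_S h_i+h_i g\le R_i g$. The support function of $\overline{B_{R_i}(c_i)}$ is $\langle c_i,\xi\rangle+R_i$, so the desired containment is equivalent to $w_i\le 0$, where $w_i(\xi):=h_i(\xi)-\langle c_i,\xi\rangle-R_i$. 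A direct computation at $\xi_0=-e_n$, using $q_i=h_i(-e_n)(-e_n)+\nabla_S h_i(-e_n)$ and $c_i=q_i+R_ie_n$, gives $w_i(-e_n)=0$ and $\nabla_S w_i(-e_n)=0$; moreover $\langle c_i,\cdot\rangle$ and the constant $R_i$ contribute $0$ and $R_i g$ respectively to the operator $\nabla^2_S(\cdot)+(\cdot)g$, so that $\nabla^2_S w_i+w_i g\le 0$ on all of $S^{n-1}$. Restricting $w_i$ to an arbitrary great circle through $-e_n$ and writing $v(s)$ for its arclength parametrization, this inequality becomes $v''+v\le 0$ with $v(0)=v'(0)=0$; a Sturm comparison (equivalently, monotonicity of $v/\sin s$ on $(0,\pi)$) yields $v\le 0$, hence $w_i\le 0$, since such great circles sweep out the whole sphere.

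It remains to take $i\to\infty$. First, the chosen supporting hyperplane meets $\bom$ only at $0$: if a segment lay in $\pom\cap\{x_n=0\}$, then near it $\pom_i$ would be graphs $x_n=f_i(x')$ of convex functions tending to $0$ with graph-curvature $\ge\lambda(\pom)$, forcing $f_i''\ge\lambda(\pom)/2$ on an interior subinterval and hence a second difference bounded below by a fixed positive constant, contradicting $f_i\to 0$. Consequently the lowest points satisfy $q_i\to 0$, since their heights tend to $\inf_\Om x_n=0$ and they can accumulate only on $\pom\cap\{x_n=0\}=\{0\}$, so $c_i\to(0,R_\infty)$ with $R_\infty=\lim R_i\le R$. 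Since for $R_\infty\le R$ one has $|y|^2\le 2R_\infty y_n\Rightarrow|y|^2\le 2R y_n$ on $\{y_n\ge0\}$, i.e. $\overline{B_{R_\infty}((0,R_\infty))}\subseteq\overline{B_R(Re_n)}$, passing to the limit in $\Om_i\subseteq\overline{B_{R_i}(c_i)}$ gives $\Om\subseteq\overline{B_R(Re_n)}$, as claimed.

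The main obstacle is the rolling estimate of the second paragraph: converting the purely pointwise curvature bound $\lambda(\pom_i)\ge 1/R_i$ into a global enclosure of $\Om_i$ by a single tangent ball. This is Blaschke's rolling theorem, and the support-function/Sturm argument above is the self-contained route to it; one must also be careful that the elementary no-face argument and the convergence $q_i\to 0$ legitimately identify the limiting ball as the one tangent at $0$ prescribed in the statement.
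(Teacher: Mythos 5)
Your proof is correct, but it takes a genuinely different route from the paper's. The paper works in the smooth category first: it slices $\Om$ by two--dimensional planes $P$ containing the $x_n$--axis, compares the planar curves $\gamma_1=\pom\cap P$ and $\gamma_2=\partial B_{R+\delta}((R+\delta)e_n)\cap P$, which are tangent at $0$, and rules out a second contact point by an elementary maximum argument (at the maximum of $\gamma_1-\gamma_2$ along the chord one gets equal first derivatives and ordered second derivatives, contradicting $\mathrm{curvature}(\gamma_1)\ge \frac1R>\frac{1}{R+\delta}=\mathrm{curvature}(\gamma_2)$); then it lets $\delta\to 0$, and dismisses the passage from smooth domains to general $\lambda$-convex domains in one sentence. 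You instead keep the exhausting sequence $\{\Om_i\}$ throughout and prove the rolling estimate for each smooth $\Om_i$ via the support function: the curvature hypothesis becomes the differential inequality $\nabla_S^2h_i+h_ig\le R_ig$, the ball's support function $\langle c_i,\cdot\rangle+R_i$ contributes exactly $R_ig$ to the same operator, and the Sturm comparison ($v''+v\le 0$, $v(0)=v'(0)=0$ implies $v\le 0$, via monotonicity of $v/\sin s$) along great circles through $-e_n$ gives the global enclosure $\Om_i\subseteq \overline{B_{R_i}(c_i)}$ --- a self-contained proof of Blaschke's rolling theorem that avoids both the $\delta$-enlargement and the contact-point argument. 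What your route buys is precisely the step the paper glosses over: since $0\notin\pom_i$, the tangent ball of $\Om_i$ sits at its lowest point $q_i$ rather than at $0$, and your no-flat-face argument (a segment in $\pom\cap\{x_n=0\}$ would force approximating convex graphs with $f_i''$ bounded below to converge uniformly to an affine function, which is impossible) combined with $q_i\to 0$ legitimately identifies the limiting ball as the one tangent at $0$. Two points to tighten, both at the level of detail the paper itself omits: (i) the supremum defining $\lambda(\pom)$ need not be attained by a single exhausting sequence, so one should run your argument for every admissible $\lambda<\lambda(\pom)$, obtaining $\Om\subseteq\overline{B_{1/\lambda}((1/\lambda)e_n)}$, and then intersect over $\lambda\uparrow\lambda(\pom)$; (ii) the bound $f_i''\ge \lambda/2$ in the no-flat-face step should be justified by the interior gradient estimate for convex functions converging uniformly to $0$, so that the factor $(1+(f_i')^2)^{-3/2}$ is close to $1$ on an interior subinterval.
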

\begin{proof}  We will complete the proof by two steps.

{\sl Step 1.}   Assuming that $\Om$ is smooth, we  are going to prove that for any $ \delta>0$
 \beq \label{2.1}  \Om\subseteq B_{R+\delta}((R+\delta)e_{n}) .\eeq
 It is enough to   prove that  for any two-dimensional plane $P$ which contains $x_{n}-$axis, we have
  \beq \label{2.2} \Om\bigcap P\subseteq B_{R+\delta}((R+\delta)e_{n})\bigcap P.\eeq

 Note that $\Om\bigcap P$ and $B_{R+\delta}((R+\delta)e_{n})\bigcap P$ is two dimensional domain in the two-dimensional plane $P$,
 touched at $x=0$.

  Write $$\gamma_{1}=\partial\Om\bigcap P, \ \ \gamma_{2}=\partial B_{R+\delta}((R+\delta) e_{n})\bigcap P.$$
 We see that $\gamma_{1}$ and $\gamma_{2}$ is two curves in plane $P$ and we have
 $curvature(\gamma_{1})\geq\lambda(\pom)=\frac{1}{R}>\frac{1}{R+\delta}=curvature(\gamma_{2})$. Hence,
 $\gamma_{1}$  is above (in the inside of) $\gamma_{2}$ near $x=0$ and they are tangent at $x=0$.

 To prove (2.2), it is sufficient to prove the claim that $\gamma_{1}$ is always above (in the inner side of) $\gamma_{2}$.

 Suppose that the claim is false. We let $\overline{x}\neq0$  be the  nearest contact point  from $0$ and introduce a new coordinate in plane $P$ as follows.
  Take line 0$\overline{x}$ to be the coordinate axis with variable t
and take the direction orthogonal
to the line $0\bar{x}$ to be the graph height  coordinate for $\gamma_{1}$ and $\gamma_{2}$.
 Without loss of generality, we assume $0\geq\gamma_{1}(t)\geq\gamma_{2}(t)$ for $  t\in [0,   \overline{x}]$.

 Now we consider the point $t_{0}$ such that
  $$\gamma_{1}(t_0)-\gamma_{2}(t_0)=\max_{t\in [0, \bar x]}(\gamma_{1} -\gamma_{2})(t).$$
  Then $t_{0}\in (0, \bar x) $, $(\gamma_{1} -\gamma_{2} )'(t_{0})=0$ and $(\gamma_{1}-\gamma_{2})''(t_{0})\leq0$.
 That is
  $$\gamma_{1}'(t_{0})=\gamma_{2}'(t_{0}), \ \   0\leq\gamma_{1}''(t_{0})\leq\gamma_{2}''(t_{0}),$$
 which implies that $curvature(\gamma_{1})(t_{0})\leq curvature(\gamma_{2})(t_{0})$, a contradiction!

In this way, we have proved (2.1).  Letting $\delta$ to 0, we have
 \beq \label{2.3} \Om\subseteq B_{R}(Re_{n}).\eeq

{\sl Step 2.}   For  a general   $\lambda$-convex domain $\Om$, by Definition 2.1 and the result of Step 1, one easily see that (2.3) still holds.

\end{proof}

\noindent{\bf  Definition 2.2}. {\sl We say that a   domain
  $\Om$   in $R^n$ satisfies exterior sphere condition with radius $R$ if   for  each $x\in \pom$,  there
 is a ball $B_R(y)$ centered at $y$ with radius $R$, such that $B_R(y) \supseteq \Om $ and  $\partial B_R(y) \bigcap  \pom\ni x$.}

It is direct from Lemma 2.1 that
\begin{Lemma} \label{2.2} A  $\lambda$-convex domain   satisfies  exterior sphere condition with radius $R=\frac{1}{\lambda(\pom)}$.
 \end{Lemma}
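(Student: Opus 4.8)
The plan is to derive the exterior sphere condition pointwise directly from Lemma 2.1, since the two statements become almost identical once the definitions are unwound. Fix an arbitrary boundary point $x\in\pom$. Because $\Om$ is convex, it admits a supporting (generalized tangent) hyperplane at $x$; applying the translation and rotation that carry $x$ to the origin and this hyperplane to $\{x_n=0\}$ with $\Om$ lying in the upper half-space, I place myself in exactly the normalized situation required by Lemma 2.1, namely $x=0$ and $\Om\subseteq R_+^n$.

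In these coordinates Lemma 2.1 yields $\Om\subseteq B_R(Re_n)$ with $R=\frac{1}{\lambda(\pom)}$. I then take the candidate center to be $y=Re_n$. The inclusion $B_R(y)\supseteq\Om$ is precisely the conclusion of Lemma 2.1, while the required tangency is immediate: since $|0-Re_n|=R$, the origin lies on $\partial B_R(Re_n)$, so the normalized boundary point $x=0$ belongs to $\partial B_R(y)\cap\pom$. Thus both requirements of Definition 2.2 hold in the normalized frame.

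Finally I transform back. Translations and rotations are isometries, so they send the ball $B_R(y)$ to a ball of the same radius $R$, preserve the inclusion $B_R(y)\supseteq\Om$, and preserve the incidence $x\in\partial B_R(y)\cap\pom$. Hence in the original coordinates there is a ball of radius $R=\frac{1}{\lambda(\pom)}$ containing $\Om$ and touching $\pom$ at $x$. As $x\in\pom$ was arbitrary, $\Om$ satisfies the exterior sphere condition with radius $R=\frac{1}{\lambda(\pom)}$, which is the assertion.

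I do not anticipate a genuine obstacle here: the substantive content is entirely carried by Lemma 2.1, and the remaining work is only the bookkeeping of observing that the origin sits on the sphere of radius $R$ about $Re_n$ and that rigid motions are isometries. The single point that deserves a line of care is the appeal to convexity to guarantee a supporting hyperplane at $x$, which is what legitimizes the normalization used to invoke Lemma 2.1.
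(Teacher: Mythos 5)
Your proposal is correct and follows exactly the paper's route: the paper records Lemma 2.2 as a direct consequence of Lemma 2.1, and your write-up simply makes explicit the normalization at an arbitrary boundary point, the tangency of $\pom$ and $\partial B_R(Re_n)$ at the origin, and the invariance under rigid motions. Nothing is missing; you have just spelled out what the paper leaves implicit.
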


 \begin{Lemma} \label{2.3}
  A bounded  convex domain  satisfying exterior sphere condition with radius $R$
is  $(2, \frac{1}{2R})$- type domain.
Conversely, a $(2, \eta)$- type domain satisfies exterior sphere condition with radius $R=max\{\frac{1}{\eta},\ d_{\Omega}\}$.
 \end{Lemma}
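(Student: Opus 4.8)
The plan is to prove both implications by completing the square, comparing $\Om$ with the ball $B_R(Re_n)$ that is tangent at the origin both to the hyperplane $x_n=0$ and to the paraboloid $x_n=\eta|x'|^2$. The single elementary fact driving everything is that $(x',x_n)\in \overline{B_R(Re_n)}$ is equivalent to $|x'|^2+(x_n-R)^2\le R^2$, i.e. to $|x'|^2+x_n^2\le 2Rx_n$.

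For the forward direction, suppose $\Om$ satisfies the exterior sphere condition with radius $R$ and fix $x_0\in\pom$. By definition there is a ball $B_R(y)\supseteq\Om$ with $x_0\in\partial B_R(y)\cap\pom$, so $|x_0-y|=R$. I would apply the translation sending $x_0$ to $0$ and the rotation sending $y$ to $Re_n$ (both of which preserve the problem), and then simply read off from $\Om\subseteq B_R(Re_n)$ that every $(x',x_n)\in\Om$ satisfies $|x'|^2+x_n^2\le 2Rx_n$, hence $x_n\ge \frac{|x'|^2+x_n^2}{2R}\ge \frac{1}{2R}|x'|^2$. This is exactly $\Om\subseteq\{x_n\ge \frac{1}{2R}|x'|^2\}$, so $x_0$ is $(2,\frac{1}{2R})$-type; since $x_0\in\pom$ was arbitrary, $\Om$ is $(2,\frac{1}{2R})$-type.

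For the converse, suppose $\Om$ is $(2,\eta)$-type and fix $x_0\in\pom$, normalized so that $x_0=0$ and $\Om\subseteq\{x_n\ge\eta|x'|^2\}$. I would again take the candidate ball $B_R(Re_n)$, which automatically touches $\pom$ at $0$, and verify $\Om\subseteq B_R(Re_n)$. For $x=(x',x_n)\in\Om$ we have $0\le x_n\le d_{\Om}$ (as $0\in\bom$) and $|x'|^2\le x_n/\eta$, so $|x'|^2+(x_n-R)^2-R^2\le \frac{x_n}{\eta}+x_n^2-2Rx_n=x_n\bigl(\frac{1}{\eta}+x_n-2R\bigr)$, which is $\le 0$ as soon as $2R\ge \frac{1}{\eta}+x_n$. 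With $R=\max\{\frac{1}{\eta},\,d_{\Om}\}$ we get $2R=R+R\ge \frac{1}{\eta}+d_{\Om}\ge \frac{1}{\eta}+x_n$, so $\Om\subseteq B_R(Re_n)$ and $0\in\partial B_R(Re_n)\cap\pom$, giving the exterior sphere condition at $x_0$.

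The step I expect to be the real obstacle is the choice of $R$ in the converse. The region $\{x_n\ge\eta|x'|^2\}$ above the paraboloid is unbounded, so no single ball can contain it; the osculating radius $\frac{1}{2\eta}$ only controls the second-order contact at the origin and does not suffice globally. The resolution is to exploit that $\Om$ is bounded with $0\in\pom$, so that $x_n\le d_{\Om}$, and then to enlarge the radius to $R=\max\{\frac{1}{\eta},d_{\Om}\}$ precisely so that the inequality $2R\ge \frac{1}{\eta}+x_n$ survives in the worst case $x_n=d_{\Om}$; this is exactly where the $d_{\Om}$ term in the stated radius comes from.
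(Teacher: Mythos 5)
Your proof is correct, and it is worth noting that it is \emph{more} than the paper provides: the paper does not prove Lemma 2.3 in the text at all, but simply cites Lemma 2.1 of \cite{[JL]}, so your argument is a genuinely self-contained replacement for that external reference. Both directions check out. The forward direction is exactly the completed-square equivalence $x\in \overline{B_R(Re_n)} \Leftrightarrow |x'|^2+x_n^2\le 2Rx_n$, which yields $x_n\ge \frac{1}{2R}|x'|^2$ at every boundary point (convexity of $\Omega$ is never needed here, only the exterior sphere condition). In the converse you correctly identify the one nontrivial point: the paraboloid region is unbounded, so the osculating radius $\frac{1}{2\eta}$ cannot work globally, and boundedness must enter through $x_n\le d_{\Omega}$ (valid since $x_0=0\in\partial\Omega$ and $x\in\Omega$ lie within distance $d_{\Omega}$ of each other); the choice $R=\max\{\frac{1}{\eta},d_{\Omega}\}$ then gives $2R\ge \frac{1}{\eta}+x_n$, which is precisely what the estimate $|x'|^2+(x_n-R)^2-R^2\le x_n\bigl(\frac{1}{\eta}+x_n-2R\bigr)\le 0$ requires. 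One cosmetic remark: your computation places $\Omega$ in the \emph{closed} ball $\overline{B_R(Re_n)}$; since $\Omega$ is open, no point of $\Omega$ can lie on the sphere (a neighborhood of such a point would leave the closed ball), so $\Omega$ is in fact contained in the open ball and the exterior sphere condition of Definition 2.2 holds verbatim. This is a one-line observation you could add, but it is not a gap.
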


 \begin{proof}  See Lemma 2.1 in \cite{[JL]}.
\end{proof}

Therefore, $\lambda$-convex domains are $(2, \eta)$- type.

\begin{Lemma}\label{2.4}
Let $\Om$ be a bounded convex domain and $u$ be a  concave  function with $u|_{\pom}=0$. If there are $\alpha\in(0,1]$ and  $M>0$
such that
\beq \label {2.4} |u(x)|\leq M{d_{x}}^{\alpha},\ \ \forall x\in \Om\eeq
where  $d_{x}=dist(x,\pom)$, then $u\in C^{\alpha}(\overline{\Om})$ and
$$|u|_{C^{\alpha}(\overline{\Om})}\leq 2 M d_{\Om}^{\alpha}.$$
 \end{Lemma}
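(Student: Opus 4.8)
The plan is to control the sup-norm and the Hölder seminorm of $u$ separately and then combine them. First I note that, being concave with $u=0$ on $\pom$, the function $u$ is nonnegative on $\Om$: every interior point is an interior convex combination of two boundary points along a chord, on which concavity bounds $u$ below by the corresponding convex combination of the (zero) boundary values. Moreover, the hypothesis $|u(x)|\le M d_x^{\alpha}$ forces $u\to 0$ as $x\to\pom$, so $u$ extends continuously to $\bom$ with zero boundary values. The sup-norm bound is then immediate: since $d_x\le d_{\Om}$ and $\alpha>0$, we get $\sup_{\bom}|u|\le M d_{\Om}^{\alpha}$.

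The heart of the matter is the seminorm estimate $[u]_{C^{\alpha}(\bom)}\le M$. I would fix $x,y\in\bom$ with $u(x)\ge u(y)$, so that $|u(x)-u(y)|=u(x)-u(y)\ge 0$, and split into two cases according to the size of $|x-y|$ relative to $d_x$. In the \emph{far} regime $|x-y|\ge d_x$ I use only positivity and the pointwise bound: $u(x)-u(y)\le u(x)\le M d_x^{\alpha}\le M|x-y|^{\alpha}$, where the last step uses $d_x\le|x-y|$ and $\alpha>0$.

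In the \emph{near} regime $|x-y|<d_x$ (which in particular forces $x\in\Om$) I exploit concavity along the chord. Extending the segment from $x$ through $y$ until it meets $\pom$ at a point $z$, the point $y$ lies between $x$ and $z$; writing $y=(1-t)x+tz$ with $t=|x-y|/|x-z|$ and using $u(z)=0$, concavity gives $u(y)\ge(1-t)u(x)$, that is $u(x)-u(y)\le \tfrac{|x-y|}{|x-z|}\,u(x)$. Since $z\in\pom$ we have $|x-z|\ge d_x$, whence $u(x)-u(y)\le M d_x^{\alpha-1}|x-y|\le M|x-y|^{\alpha}$, the final inequality because $|x-y|^{1-\alpha}\le d_x^{1-\alpha}$ (here $1-\alpha\ge 0$). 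Thus $u(x)-u(y)\le M|x-y|^{\alpha}$ in all cases, giving $[u]_{C^{\alpha}(\bom)}\le M$. Combining this with $\sup_{\bom}|u|\le M d_{\Om}^{\alpha}$ yields the stated bound $|u|_{C^{\alpha}(\bom)}\le 2M d_{\Om}^{\alpha}$ (under the diameter-scaled Hölder norm $\sup|u|+d_{\Om}^{\alpha}[u]_{C^{\alpha}}$).

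I expect the only delicate point to be the near-regime barrier: one must extend the chord in the correct direction (from the higher value $x$ through $y$ out to the boundary) so that $y$ becomes an interior convex combination and the vanishing boundary value kills one term, and one must observe that the exit length $|x-z|$ is bounded below by $d_x$. The far-regime and sup-norm estimates are routine, as is the verification that the seminorm bound covers pairs with $x$ or $y$ on $\pom$—indeed, if $y\in\pom$ then $d_x\le|x-y|$ automatically places the pair in the far regime.
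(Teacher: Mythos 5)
Your proof is correct. It cannot be compared against an in-paper argument, because the paper does not actually prove this lemma: its entire ``proof'' is the citation ``See Lemma 2.3 in \cite{[JL]}.'' Your chord-and-concavity argument---nonnegativity of $u$ from concavity plus zero boundary data, the dichotomy at scale $d_x$, and in the near regime extending the segment from $x$ through $y$ to a boundary point $z$ with $|x-z|\ge d_x$ so that $u(x)-u(y)\le \frac{|x-y|}{|x-z|}u(x)$---is precisely the standard self-contained proof, and is essentially what the cited lemma in \cite{[JL]} amounts to; so your proposal supplies what the paper outsources. One point you were right to flag explicitly: your estimates give $\sup_{\bom}|u|\le M d_{\Om}^{\alpha}$ and $[u]_{C^{\alpha}(\bom)}\le M$, hence $\sup|u|+[u]_{C^{\alpha}}\le M(1+d_{\Om}^{\alpha})$, which coincides with the stated constant $2Md_{\Om}^{\alpha}$ only when $d_{\Om}\ge 1$ or under the diameter-scaled convention $|u|_{C^{\alpha}}=\sup|u|+d_{\Om}^{\alpha}[u]_{C^{\alpha}}$ that you adopt. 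Since a bound of the form $[u]_{C^{\alpha}}\le Md_{\Om}^{\alpha}$ is dimensionally impossible (the seminorm carries units of $u$ per length$^{\alpha}$), the scaled reading is the only one under which the paper's constant can be literally correct, and in any case the seminorm bound $[u]_{C^{\alpha}}\le M$ is all that the paper's applications (Theorems 2.6, 3.1, 3.2) really require; this is a caveat about the statement's norm convention, not a gap in your argument.
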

\begin{proof} See Lemma 2.3 in \cite{[JL]}.
\end{proof}

\begin{Lemma} 
When $\Om=B_{R}(0)\subset R^n$,  then $U(x)=\sqrt{R^{2}-|x|^{2}}$ is the solution to problem (1.1).
\end{Lemma}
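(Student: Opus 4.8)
The plan is to verify directly that $U(x)=\sqrt{R^2-|x|^2}$ meets all three requirements of problem (1.1) on $\Om=B_R(0)$, and then to invoke uniqueness to conclude it is the solution. The boundary and positivity conditions are immediate: on $\pom=\partial B_R(0)$ we have $|x|=R$, so $U=0$, whereas $|x|<R$ in the interior gives $U>0$. Everything therefore reduces to checking the differential equation.

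First I would record the first-order data. Differentiating gives $U_i=-x_i/U$, so $\nabla U=-x/U$ and $|\nabla U|^2=|x|^2/U^2$; since $U^2=R^2-|x|^2$, this produces the convenient identity
\beq 1+|\nabla U|^2=\frac{R^2}{U^2}. \eeq
Differentiating once more yields the Hessian $U_{ij}=-\delta_{ij}/U-x_ix_j/U^3$, whence $\Delta U=-n/U-|x|^2/U^3$. The only mildly tedious computation is the fully contracted term: using $U_iU_j=x_ix_j/U^2$ together with the Hessian and the identity $\sum_{i,j}x_i^2x_j^2=|x|^4$, one obtains $U_iU_jU_{ij}=-|x|^2/U^3-|x|^4/U^5$.

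Substituting these into the left-hand side of (1.1) and dividing the contracted term by $1+|\nabla U|^2=R^2/U^2$, the two $n/U$ contributions cancel, and collecting the remaining terms over the common denominator $R^2U^3$ leaves a numerator equal to $|x|^2\bigl(U^2+|x|^2-R^2\bigr)$. The defining relation $U^2+|x|^2=R^2$ forces this to vanish identically, so $U$ solves the equation throughout $\Om$. Since $B_R(0)$ is a bounded convex $C^2$-domain with $H_{\partial B_R}>0$, the concave solution to (1.1) is unique by Theorem 1.1 (equivalently Lin's theorem), and hence this explicit $U$ must be \emph{the} solution. I do not expect any genuine obstacle here — the verification is elementary, the only care needed being the bookkeeping in the contracted Hessian term. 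As a sanity check one may also argue geometrically: the graph of $U$ is a Euclidean hemisphere resting on $\partial B_R(0)$, which is a totally geodesic (in particular minimal) hypersurface of the hyperbolic upper half-space, so it must satisfy the minimal-graph equation.
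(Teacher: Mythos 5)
Your proof is correct and takes essentially the same approach as the paper: direct verification that $U$ satisfies the equation (the boundary and positivity conditions being immediate), the only difference being that the paper first reduces to the radial ODE $(n-1)\frac{u_r}{r}+\frac{u_{rr}}{1+u_r^2}+\frac{n}{u}=0$ and checks it for $U(r)=\sqrt{R^2-r^2}$, while you carry out the equivalent computation with the full Cartesian Hessian $U_{ij}=-\delta_{ij}/U-x_ix_j/U^3$. Your added appeal to uniqueness (to justify calling $U$ \emph{the} solution) and the geometric hemisphere remark are harmless extras that the paper omits.
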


\begin{proof}
In the spherical symmetry case, $u(x)=u(r)$ where $r=|x|$. Then the  equation in (1.1)  is reduced to
\beq \label{2.5}
\begin{split}
(n-1)\frac{u_{r}}{r}+\frac{u_{rr}}{1+u_{r}^{2}}+\frac{n}{u}=0.
\end{split}
\eeq

By a simple computation, we obtain
\beq  \label{2.6}
\begin{split}
U_{r}=&\frac{-r}{\sqrt{R^{2}-r^{2}}},\\
\frac{U_{r}}{r}=&\frac{-1}{\sqrt{R^{2}-r^{2}}}, \\
U_{rr}=&\frac{-R^{2}}{(R^{2}-r^{2})\sqrt{R^{2}-r^{2}}}.
\end{split}
\eeq
Using (2.6), we have
 $$(n-1)\frac{U_{r}}{r}+\frac{U_{rr}}{1+U_{r}^{2}}+\frac{n}{U}=  0. $$

\end{proof}

\begin{Theorem}\label{2.6}
Suppose $\Om$ is a bounded  convex domain satisfying exterior sphere condition with radius $R$ and $u$ is the
concave solution to  problem (1.1).
Then $u\in C^{\frac{1}{2}}(\overline{\Om})$,
and $|u|_{C^{\frac{1}{2}}(\overline{\Om})}\leq 2\sqrt{2Rd_{\Om}}$.
\end{Theorem}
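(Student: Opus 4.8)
The plan is to bound $u$ from above by the explicit spherical solution of Lemma 2.5 and then read off the boundary decay via Lemma 2.4. First I would note that problem (1.1) is invariant under translations, so for any ball $B_R(y)$ the function $U_y(x)=\sqrt{R^2-|x-y|^2}$ solves (1.1) in $B_R(y)$ by Lemma 2.5. Now fix $x\in\Om$, let $\bar x\in\pom$ be a nearest boundary point so that $d_x=|x-\bar x|=\mathrm{dist}(x,\pom)$, and use the exterior sphere condition to produce a ball $B_R(y)$ with $\Om\subseteq B_R(y)$ and $\bar x\in\partial B_R(y)\cap\pom$. The heart of the argument is the pointwise comparison $u\le U_y$ throughout $\Om$.

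To establish $u\le U_y$ I would apply the interior maximum principle to $w:=u-U_y$. Since $\Om\subseteq\overline{B_R(y)}$ and the open set $\Om$ cannot contain any point of $\partial B_R(y)$, the function $U_y$ is smooth and positive on $\Om$, while $U_y\ge 0=u$ on $\pom$; hence $w\le 0$ on $\pom$, and as $w$ is continuous on $\overline\Om$, any positive maximum of $w$ must occur at an interior point $x^*\in\Om$. Writing the equation as $a_{ij}(\nabla v)\,v_{ij}+n/v=0$ with $a_{ij}(p)=\delta_{ij}-p_ip_j/(1+|p|^2)$ positive definite, at $x^*$ we have $\nabla u(x^*)=\nabla U_y(x^*)$, so both equations carry the \emph{same} coefficient matrix $A:=a_{ij}(\nabla u(x^*))$. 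Subtracting them yields
\[
A_{ij}\bigl(u_{ij}-U_{y,ij}\bigr)(x^*)=\frac{n}{U_y(x^*)}-\frac{n}{u(x^*)}.
\]
At an interior maximum of $w$ the Hessian satisfies $D^2w(x^*)\le 0$, so the left-hand side is $\le 0$; but $w(x^*)>0$ forces $u(x^*)>U_y(x^*)>0$, making the right-hand side strictly positive. This contradiction shows $w\le 0$, i.e.\ $u\le U_y$ in $\Om$.

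With the comparison in hand, the remaining estimate is purely geometric: for $x\in\Om$ the triangle inequality gives $|x-y|\ge|\bar x-y|-|x-\bar x|=R-d_x$, hence
\[
u(x)\le U_y(x)=\sqrt{\bigl(R-|x-y|\bigr)\bigl(R+|x-y|\bigr)}\le\sqrt{d_x\cdot 2R}=\sqrt{2R}\,d_x^{1/2}.
\]
Since $u>0$, this reads $|u(x)|\le\sqrt{2R}\,d_x^{1/2}$ for every $x\in\Om$. Applying Lemma 2.4 with $\alpha=\tfrac12$ and $M=\sqrt{2R}$ then gives $u\in C^{1/2}(\overline\Om)$ together with the bound $|u|_{C^{1/2}(\overline\Om)}\le 2M\,d_\Om^{1/2}=2\sqrt{2R d_\Om}$, as claimed.

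The main obstacle is the comparison $u\le U_y$, and the subtlety lies in the singular zeroth-order term $n/u$. The key structural facts that make the maximum-principle step go through are that this term is \emph{decreasing} in $u$ (supplying the correct sign in the contradiction) and that the coefficient matrix $a_{ij}$ depends only on the gradient (so the second-order parts share identical coefficients at a critical point of $w$). I would also take care to verify that the maximum is genuinely interior — so that smoothness and positivity of both $u$ and $U_y$ are available there — which in turn relies on $\Om$ meeting $\partial B_R(y)$ only along $\pom$.
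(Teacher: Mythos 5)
Your proof is correct and follows essentially the same route as the paper: comparison of $u$ with the explicit spherical solution of Lemma 2.5 on the exterior ball guaranteed by the exterior sphere condition, followed by the geometric estimate $u(x)\le\sqrt{2R}\,d_x^{1/2}$ and an application of Lemma 2.4. The only difference is that you spell out the maximum-principle argument behind the comparison $u\le U_y$ (correctly using the monotonicity of the term $n/u$ and the positivity of $U_y$ on $\Om$), whereas the paper simply invokes the comparison principle at that step.
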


\begin{proof}
Due to Lemma 2.4,  it is enough to
 prove $$ u(y)\leq \sqrt{2R} {d_{y}}^{\frac{1}{2}}, \ \ \forall y\in \Om .$$
 Taking a $y\in \Om$, we can find $z\in \pom$ such that $dist(y,z)=d_{y}.$
 Without loss generality (by translation and rotation), we assume $z=0$ and
  the line determined by $z$ and $y$
 is the $x_{n}-axis$. Note that the equation in problem (1.1) is invariant under translation and rotation transforms.

 Since $\Om$ is a bounded  convex domain satisfying exterior sphere condition with radius $R$,
 we conclude that $\Om\subseteq B_{R}(Re_{n})$. Let $U$ be the solution of (1.1) in $B_{R}(Re_{n})$ as in Lemma 2.5.
 By comparison principal, we conclude $u\leq U$ in $\Om$. Restricting on the point $y$ we have
\begin{equation*}
\begin{split}
u(y)\leq  U(y)=&\sqrt{R^{2}-(R-d_{y})^2}\\
=&\sqrt{2Rd_{y}-d_{y}^2}\\
\leq& \sqrt{2R}\sqrt{d_{y}} .
\end{split}
 \end{equation*}

\end{proof}

{\bf Proof of the case $a=2$ of Theorem 1.2:} \  it is direct from Lemma 2.3 and Theorem 2.6.

\section{ Proof of Theorem 1.2}

In previous Section we have proved    Theorem 1.2 for the case $a=2$. In this section,  we prove   Theorem 1.2 for the case $a\in(2, \infty]$
and thus complete its proof.

\begin{Theorem}\label {3.1}
Let $\Om$ be $(a, \eta)$ type domain with $a\in (2,+\infty)$  and $u$ is the concave solution to problem (1.1).
Then $u\in C^{\frac{1}{a}}(\overline{\Om})$,
and $|u|_{C^{\frac{1}{a}}(\overline{\Om})}\leq C(a, \eta,   d_{\Om}, n)$.
\end{Theorem}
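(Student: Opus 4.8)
The plan is to run the strategy of the $a=2$ case (Theorem 2.6) with the enclosing ball replaced by a $y$-dependent osculating sphere. By Lemma 2.4 it suffices to produce a constant $M=M(a,\eta,d_\Om,n)$ with
$$u(y)\le M\,d_y^{1/a},\qquad \forall\,y\in\Om,\quad d_y=\mathrm{dist}(y,\pom).$$
Fixing $y\in\Om$, I would take a nearest boundary point $z\in\pom$, so $d_y=|y-z|$, and use the invariance of (1.1) under translation and rotation to normalize $z=0$, $y=d_y e_n$, with $\{x_n=0\}$ a supporting hyperplane (the nearest-point property). Since $z$ is an $(a,\eta)$-type point, after this normalization $\Om\subseteq\{x_n\ge\eta|x'|^a\}$, possibly after replacing $\eta$ by a comparable constant depending only on the convexity; this replacement, forced at corner points where the nearest-point normal need not be the $(a,\eta)$-optimal one, is the only place the alignment subtlety enters and only affects $C$.

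The super-solutions are the hemisphere solutions of Lemma 2.5. For any ball $B_R(p)\supseteq\Om$, Lemma 2.5 together with translation invariance gives that $U_{R,p}(x)=\sqrt{R^2-|x-p|^2}$ solves (1.1) in $B_R(p)$; in particular it is a smooth positive super-solution on $\Om$, and $U_{R,p}\ge 0=u$ on $\pom$ because $\Om\subseteq B_R(p)$. Exactly as in the proof of Theorem 2.6 the comparison principle yields $u\le U_{R,p}$ in $\Om$, so
$$u(y)\le\inf\big\{\,U_{R,p}(y):\ B_R(p)\supseteq\Om\,\big\}.$$
The task is thus to select, for the given $y=d_y e_n$, one enclosing ball whose hemisphere value at $y$ is $\le M d_y^{1/a}$. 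Unlike the case $a=2$, no ball is tangent to $\pom$ at $z=0$; the extra room comes from letting the center slide along the $x_n$-axis and escape to infinity.

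Restricting to centers $p=ce_n$, I would take $B_R(ce_n)$ tangent to the model surface $\{x_n=\eta|x'|^a\}$ at radius $|x'|=r_*$, which forces $c=\eta r_*^a+\tfrac{1}{\eta a}r_*^{\,2-a}$ and $R^2=r_*^2+(c-\eta r_*^a)^2$, whence $R-c=\tfrac{(a-2)\eta}{2}r_*^a(1+o(1))$ and
$$U_{R,ce_n}(y)=\sqrt{(R-c+d_y)(R+c-d_y)}\ \lesssim\ \sqrt{\Big(d_y+\tfrac{(a-2)\eta}{2}r_*^a\Big)\tfrac{2}{\eta a}r_*^{\,2-a}}.$$
Balancing $d_y\sim\eta r_*^a$, i.e.\ $r_*\sim(d_y/\eta)^{1/a}$, makes both factors comparable and gives $U_{R,ce_n}(y)\le C(a,\eta)\,d_y^{1/a}$. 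This is precisely how the tangential curvature of $\pom$ is recovered: a calculation shows that any barrier of the form $\phi(x_n-\eta|x'|^a)$ reduces on the axis to the one-dimensional profile equation $\tfrac{\phi''}{1+\phi'^2}+\tfrac{n}{\phi}\le 0$, which can never beat the flat exponent $\tfrac1{n+1}$, so the improvement to $\tfrac1a$ must come from the osculating sphere whose center runs off to infinity as $r_*\to0$.

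I expect the main obstacle to be verifying that this ball genuinely contains all of $\Om$, not merely that it osculates the model region near $z$. For this I would use $\Om\subseteq\{x_n\ge\eta|x'|^a\}\cap\overline{B_{d_\Om}(0)}$ and check that the lower cap $x_n=c-\sqrt{R^2-|x'|^2}$ stays below $x_n=\eta|x'|^a$ for all $0\le|x'|\le d_\Om$, exploiting that $c,R\to\infty$ keep the cap flat over the relevant scale (if needed, enlarging $R$ by a constant factor to absorb the far part of $\Om$, which only changes $C$); the convexity of both the cap and the power surface reduces this to a comparison of their curvatures at the single tangency radius. Once containment holds, the comparison step gives $u(y)\le U_{R,ce_n}(y)\le C d_y^{1/a}$, and Lemma 2.4 upgrades this to $u\in C^{1/a}(\bom)$ with $|u|_{C^{1/a}(\bom)}\le C(a,\eta,d_\Om,n)$. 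Points $y$ far from $\pom$ (where $r_*$ is not small) are trivial, since then $d_y\ge\delta_0>0$ and $u$ is bounded, so the pointwise estimate holds after adjusting $M$.
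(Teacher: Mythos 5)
Your proposal is correct in substance but takes a genuinely different route from the paper. After the same normalization ($z=0$, the $x_n$-axis along $y-z$, $\Om\subseteq\{x_n\ge\eta|x'|^{a}\}$), the paper does not use enclosing spheres at all for $a>2$: it writes down the explicit super-solution $W(x)=\left(\left(\frac{x_n}{\varepsilon}\right)^{2/a}-|x'|^{2}\right)^{1/2}$, verifies $F[W]\le 0$ by a long direct computation (the terms $J_1,\dots,J_8$, with the exponent $b=2$ chosen to make three pairs cancel or combine) for $\varepsilon=\varepsilon(a,\eta,d_{\Om},n)$ small, gets $W\ge 0=u$ on $\pom$ immediately from the inclusion (3.2), and concludes $u(y)\le W(0,d_y)=\varepsilon^{-1/a}d_y^{1/a}$ by comparison, finishing with Lemma 2.4. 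You instead recycle the exact hemisphere solutions of Lemma 2.5 and the comparison template of Theorem 2.6, replacing the single tangent ball by a family of balls $B_R(ce_n)$ tangent to the model surface at radius $r_*$, with center and radius escaping to infinity as $r_*\to 0$. Your formulas check out: tangency forces $c=\eta r_*^{a}+\frac{1}{\eta a}r_*^{2-a}$, $R^{2}=r_*^{2}+(c-\eta r_*^{a})^{2}$, hence $R-c\sim\frac{(a-2)\eta}{2}r_*^{a}$, and with $r_*\sim(d_y/\eta)^{1/a}$ the product $(R-c+d_y)(R+c-d_y)\lesssim d_y\cdot r_*^{2-a}\sim d_y^{2/a}$, giving $u(y)\le C(a,\eta)d_y^{1/a}$. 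What your route buys is that no differential inequality ever needs to be verified (the barriers are exact solutions, moved by the invariances); what it costs is the geometric containment $\{x_n\ge\eta|x'|^{a}\}\cap B_{d_{\Om}}(0)\subseteq B_R(ce_n)$, which the paper gets for free.

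That containment is also the one under-justified step in your write-up: comparing curvatures of the cap and the power surface \emph{at the single tangency radius} is not a valid reduction, since two convex functions tangent at a point with ordered curvatures there can still cross farther away. The step is nevertheless provable, hence the gap is reparable: containment is equivalent to $\phi(r):=2c\eta(r^{a}-r_*^{a})-(r^{2}-r_*^{2})-\eta^{2}(r^{2a}-r_*^{2a})\ge 0$ on $[0,d_{\Om}]$, and one checks $\phi(r_*)=\phi'(r_*)=0$, that $\phi''\ge 2(a-2)-o(1)>0$ on $[r_*,d_{\Om}]$ once $r_*$ (equivalently $d_y$) is small depending on $(a,\eta,d_{\Om})$, and that on $[0,r_*]$ the function $\phi$ is concave-then-convex with $\phi(0)=R^{2}-c^{2}\approx\bigl(1-\frac{2}{a}\bigr)r_*^{2}>0$, which forces $\phi\ge 0$ there as well; large $d_y$ is covered by your trivial-bound remark. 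Finally, the alignment subtlety you flag (at a corner point the nearest-point direction need not be a direction realizing the $(a,\eta)$-inclusion) is real, but the paper's own proof makes exactly the same without-loss-of-generality normalization, so your treatment of it is on par with the paper's.
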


\begin{proof}

By Lemma 2.4, it is sufficient  to prove
\beq \label{3.1}
|u(y)|\leq C(a,n, \eta,  d_{\Om})\ {d_{y}}^{\frac{1}{a}}, \ \ \forall y\in \Om .
\eeq

 For any $y\in \Om$, we can find $z\in \pom$, such that $dist(y,z)=d_{y}.$
 Since the domain $\Om$ is $(a,\eta)$- type,
 without loss generality (by translation and rotation), we may assume $z=0$, and take the line determined by $z$ and $y$
  as the $x_{n}-axis$ such that
 \beq \label{3.2}
 \Om\subseteq \{(x, x_n)\in R^{n}|   x_n\geq\eta|x'|^{a}  \} .
 \eeq
We should point out that  problem (1.1) is invariant under translation and rotation transforms.

 Let
  \beq \label{3.3}W(x_{1}, ..., x_{n})=((\frac{x_{n}}{\varepsilon})^{\frac{2}{a}}-x_{1}^{2}-...-x_{n-1}^{2})^{\frac{1}{b}},
  \eeq
   where  $b\geq 2$ is a constant to be fixed.  We will choose  a  $\varepsilon>0$ such that $W$ is a super-solution to problem (1.1).

Observing that (3.2) implies that $\pom$ lies over the hypersurface $x_n=\eta|x'|^{a}$,  we can find a small $\varepsilon=C(a, \eta, d_{\Om}, n)>0$ such that
  \beq \label{3.4}
  W\geq u=0\ \  \text {on}   \ \  \partial\Om .
  \eeq
 For brevity, we write
 $$r=|x'|=\sqrt{x_{1}^{2}+...+x_{n-1}^{2}}, \ \ W(x)=W(r,x_{n})$$ and
 $$W_{i}=W_{x_{i}} , \ \  W_{ij}=W_{x_{i}x_{j}}$$
  for $i, j\in\{1, 2, ..., n-1\}$. A direct computation yields
\beq \label{3.5}
\begin{split}
W_{i}=&W_{r}\frac{x_{i}}{r},\\
W_{ij}=&W_{rr}\frac{x_{i}}{r}\frac{x_{j}}{r}+W_{r}\frac{\delta_{ij}r-\frac{x_{i}x_{j}}{r}}{r^{2}},\\
 =&\frac{W_{r}}{r}\delta_{ij}+(W_{rr}-\frac{W_{r}}{r})\frac{x_{i}}{r}\frac{x_{j}}{r},\\
W_{in}=&W_{rn}\frac{x_{i}}{r}.\\
\end{split}
\eeq
Let $F[u]$ denote the left hand side of the equation in  (1.1), i.e.,
\beq \label {3.6}
F[u]=\Delta u-\frac{u_{i}u_{j}}{1+|\nabla u|^{2}}u_{ij}+\frac{n}{u}.
\eeq
Then by (3.5) we have
\beq \label{3.7}
\begin{split}
F[u]&=(n-2)\frac{W_{r}}{r}+W_{rr}+W_{nn}+\frac{n}{W}\\
& \ \ -\frac{W_{rr}\cdot W_{r}^{2}+2W_{rn}\cdot W_{r}\cdot W_{n}+W_{nn}\cdot W_{n}^{2}}{1+W_{r}^{2}+W_{n}^{2}}\\
 &=[  W_{rr}\cdot(1+W_{n}^{2})+W_{nn}\cdot(1+W_{r}^{2})-2W_{rn}\cdot W_{r}\cdot W_{n}\\
 &+((n-2)\frac{W_{r}}{r}+\frac{n}{W})
  (1+W_{r}^{2}+W_{n}^{2})]\cdot
 [1+W_{r}^{2}+W_{n}^{2}]^{-1}\\
 &:=\frac{I+J}{1+W_{r}^{2}+W_{n}^{2}}
\end{split}
\eeq
where
\begin{equation*}
\begin{split} I&=W_{rr}\cdot(1+W_{n}^{2})+W_{nn}\cdot(1+W_{r}^{2})-2W_{rn}\cdot W_{r}\cdot W_{n},\\
 J&=((n-2)\frac{W_{r}}{r}+\frac{n}{W})\cdot(1+W_{r}^{2}+W_{n}^{2}).
\end{split}
 \end{equation*}
By the expression (3.3), we compute
\begin{equation*}
\begin{split}
W_{r}&=-\frac{2}{b}W^{1-b}\cdot r,\\
W_{n}&=\frac{2}{ab}W^{1-b}\cdot(\frac{x_{n}}{\varepsilon})^{\frac{2}{a}-1}\cdot \frac{1}{\varepsilon},\\
W_{rr}&=\frac{4(1-b)}{b^{2}}W^{1-2b}\cdot r^{2}+(-\frac{2}{b})W^{1-b},\\
W_{nn}&=\frac{4(1-b)}{a^{2}b^{2}}W^{1-2b}\cdot (\frac{x_{n}}{\varepsilon})^{\frac{4}{a}-2}\cdot(\frac{1}{\varepsilon})^{2}+\frac{2(2-a)}{a^{2}b}W^{1-b}\cdot (\frac{x_{n}}{\varepsilon})^{\frac{2}{a}-2}\cdot(\frac{1}{\varepsilon})^{2},\\
W_{rn}&=\frac{4(b-1)}{ab^{2}}W^{1-2b}\cdot (\frac{x_{n}}{\varepsilon})^{\frac{2}{a}-1}\cdot r \cdot \frac{1}{\varepsilon}.
\end{split}
 \end{equation*}
Hence
\begin{equation*}
\begin{split}
W_{rr}\cdot(1+W_{n}^{2}) = & \frac{4(1-b)}{b^{2}}W^{1-2b}\cdot r^{2} -\frac{2}{b}W^{1-b}
+\frac{16(1-b)}{a^{2}b^{4}}\cdot W^{3-4b}\cdot r^{2}\cdot(\frac{x_{n}}{\varepsilon})^{\frac{4}{a}-2}\cdot(\frac{1}{\varepsilon})^{2} \\
 & -\frac{8}{a^{2}b^{3}}\cdot W^{3-3b}\cdot(\frac{x_{n}}{\varepsilon})^{\frac{4}{a}-2}\cdot(\frac{1}{\varepsilon})^{2},
\end{split}
 \end{equation*}
 \begin{equation*}
 \begin{split}
W_{nn}\cdot(1+W_{r}^{2})=& \frac{4(1-b)}{a^{2}b^{2}}W^{1-2b}\cdot (\frac{x_{n}}{\varepsilon})^{\frac{4}{a}-2}\cdot(\frac{1}{\varepsilon})^{2}
+\frac{2(2-a)}{a^{2}b}W^{1-b}\cdot (\frac{x_{n}}{\varepsilon})^{\frac{2}{a}-2}\cdot(\frac{1}{\varepsilon})^{2}\\
& +\frac{16(1-b)}{a^{2}b^{4}}\cdot W^{3-4b}\cdot r^{2}\cdot(\frac{x_{n}}{\varepsilon})^{\frac{4}{a}-2}\cdot(\frac{1}{\varepsilon})^{2}\\
& +\frac{8(2-a)}{a^{2}b^{3}}\cdot W^{3-3b}\cdot r^{2}\cdot(\frac{x_{n}}{\varepsilon})^{\frac{2}{a}-2}\cdot(\frac{1}{\varepsilon})^{2},
\end{split}
 \end{equation*}
and
$$
 -2W_{rn}\cdot W_{r}\cdot W_{n}
= \frac{-32(1-b)}{a^{2}b^{4}}\cdot W^{3-4b}\cdot r^{2}\cdot(\frac{x_{n}}{\varepsilon})^{\frac{4}{a}-2}\cdot(\frac{1}{\varepsilon})^{2}.$$

Adding the above three equalities, we obtain
 \begin{equation*}
 \begin{split}
 I=&W_{rr}\cdot(1+W_{n}^{2})+W_{nn}\cdot(1+W_{r}^{2})-2W_{rn}\cdot W_{r}\cdot W_{n}\\
=&\frac{4(1-b)}{b^{2}}W^{1-2b}\cdot r^{2}-\frac{2}{b}W^{1-b}-\frac{8}{a^{2}b^{3}}\cdot W^{3-3b}\cdot(\frac{x_{n}}{\varepsilon})^{\frac{4}{a}-2}\cdot(\frac{1}{\varepsilon})^{2}\\
 & +\frac{4(1-b)}{a^{2}b^{2}}W^{1-2b}\cdot (\frac{x_{n}}{\varepsilon})^{\frac{4}{a}-2}\cdot(\frac{1}{\varepsilon})^{2}
+\frac{2(2-a)}{a^{2}b}W^{1-b}\cdot (\frac{x_{n}}{\varepsilon})^{\frac{2}{a}-2}\cdot(\frac{1}{\varepsilon})^{2}\\
 & +\frac{8(2-a)}{a^{2}b^{3}}\cdot W^{3-3b}\cdot r^{2}\cdot(\frac{x_{n}}{\varepsilon})^{\frac{2}{a}-2}\cdot(\frac{1}{\varepsilon})^{2}.\\
\end{split}
 \end{equation*}

 Since
$$1+W_{r}^{2}+W_{n}^{2}=1+\frac{4}{b^{2}}\cdot W^{2-2b}\cdot r^{2}+\frac{4}{a^{2}b^{2}}\cdot W^{2-2b}\cdot(\frac{x_{n}}{\varepsilon})^{\frac{4}{a}-2}(\frac{1}{\varepsilon})^{2}$$
and
$$(n-2)\cdot\frac{W_{r}}{r}+\frac{n}{W}=(n-2)(-\frac{2}{b})W^{1-b}+nW^{-1},$$
 we have
 \begin{equation*}
 \begin{split}
 J&=((n-2)\frac{W_{r}}{r}+\frac{n}{W})\cdot(1+W_{r}^{2}+W_{n}^{2})\\
&=(2-n)(\frac{2}{b})W^{1-b}+(n-2)\frac{-8}{b^{3}}W^{3-3b}\cdot r^{2}+(2-n)\frac{8}{a^{2}b^{3}}W^{3-3b}\cdot(\frac{x_{n}}{\varepsilon})^{\frac{4}{a}-2}\cdot(\frac{1}{\varepsilon})^{2}\\
 & +nW^{-1}+\frac{4n}{b^{2}}|W|^{1-2b}\cdot r^{2}+\frac{4n}{a^{2}b^{2}}|W|^{1-2b}\cdot(\frac{x_{n}}{\varepsilon})^{\frac{4}{a}-2}\cdot(\frac{1}{\varepsilon})^{2}.
\end{split}
 \end{equation*}

   Therefore, we obtain
\begin{equation} \label{3.8}
 \begin{split}  I+J=&\frac{4(n+1-b))}{b^{2}}W^{1-2b}\cdot r^{2}+(1-n)(\frac{2}{b})W^{1-b}\\
 & +(1-n)(\frac{8}{a^{2}b^{3}})W^{3-3b}\cdot(\frac{x_{n}}{\varepsilon})^{\frac{4}{a}-2}\cdot(\frac{1}{\varepsilon})^{2}\\
 &+\frac{4(n+1-b))}{a^{2}b^{2}}W^{1-2b}\cdot(\frac{x_{n}}{\varepsilon})^{\frac{4}{a}-2}\cdot(\frac{1}{\varepsilon})^{2}
+\frac{2(2-a)}{a^{2}b}W^{1-b}\cdot(\frac{x_{n}}{\varepsilon})^{\frac{2}{a}-2}\cdot(\frac{1}{\varepsilon})^{2}\\
& +\frac{8(2-a)}{a^{2}b^{3}}W^{3-3b}\cdot r^{2}\cdot(\frac{x_{n}}{\varepsilon})^{\frac{2}{a}-2}\cdot(\frac{1}{\varepsilon})^{2}
+(2-n)(\frac{8}{b^{3}})W^{3-3b}\cdot r^{2}+nW^{-1}\\
:=& J_{1}+J_{2}+J_{3}+J_{4}+J_{5}+J_{6}+J_{7}+J_{8}.
\end{split}
 \end{equation}

 Observe the pairs of $J_1$ and $J_7$,  $J_2$ and $J_8$ , $J_3$ and $J_4$.
 Each pair can be combined or canceled if we take $b=2$.   Now we fix $b=2$. Consequently, we have
 \begin{equation*}\label {3.10}
 \begin{split}
J_{1}&=(n-1)W^{-3}\cdot r^{2},\\
J_{2}&=-(n-1)W^{-1},\\
J_{3}&=\frac{-(n-1)}{a^{2}}W^{-3}\cdot(\frac{x_{n}}{\varepsilon})^{\frac{4}{a}-2}\cdot(\frac{1}{\varepsilon})^{2},\\
J_{4}&=\frac{(n-1))}{a^{2}}W^{-3}\cdot(\frac{x_{n}}{\varepsilon})^{\frac{4}{a}-2}\cdot(\frac{1}{\varepsilon})^{2},\\
J_{5}&=\frac{(2-a)}{a^{2}}W^{-1}\cdot(\frac{x_{n}}{\varepsilon})^{\frac{2}{a}-2}\cdot(\frac{1}{\varepsilon})^{2},\\
J_{6}&=\frac{(2-a)}{a^{2}}W^{-3}\cdot r^{2}\cdot(\frac{x_{n}}{\varepsilon})^{\frac{2}{a}-2}\cdot(\frac{1}{\varepsilon})^{2},\\
J_{7}&=-(n-2)W^{-3}\cdot r^{2},\\
J_{8}&=nW^{-1}.
\end{split}
 \end{equation*}

  Hence,
 \begin{equation*}
 \begin{split}
 J_{1}+J_{7}&=W^{-3}\cdot r^{2},\\
J_{2}+J_{8}&=W^{-1},\\
J_{3}+J_{4}&=0.
\end{split}
 \end{equation*}

Recalling that
 $$W (x_{1}, ..., x_{n})=((\frac{x_{n}}{\varepsilon})^{\frac{2}{a}}-x_{1}^{2}-...-x_{n-1}^{2})^{\frac{1}{2}},$$
we have
 $$W^{2}=(\frac{x_{n}}{\varepsilon})^{\frac{2}{a}}-r^{2},$$
which implies that
  $$J_{1}+J_{7}+J_{2}+J_{8}=W^{-3}(\frac{x_{n}}{\varepsilon})^{\frac{2}{a}}$$
and
$$J_{5}+J_{6}=\frac{2-a}{a^{2}}\cdot W^{-3}(\frac{x_{n}}{\varepsilon})^{\frac{2}{a}}\cdot (\frac{x_{n}}{\varepsilon})^{\frac{2}{a}-2}(\frac{1}{\varepsilon})^{2}.$$

Therefore, we obtain
$$I+J= W^{-3}(\frac{x_{n}}{\varepsilon})^{\frac{2}{a}}[1+\frac{2-a}{a^{2}}\cdot x_{n}^{\frac{2}{a}-2}(\frac{1}{\varepsilon})^{\frac{2}{a}}] .$$

Since $a\in (2, +\infty)$,   $2-a<0$ and $\frac{2}{a}-2<-1<0$, we have
\begin{equation*}
 \begin{split}
 (1+W_{r}^{2}+W_{n}^{2})F[u]&= I+J\\
 & \leq W^{-3}(\frac{x_{n}}{\varepsilon})^{\frac{2}{a}}[1+\frac{2-a}{a^{2}}\cdot   d_{\Om}^{\frac{2}{a}-2}(\frac{1}{\varepsilon})^{\frac{2}{a}}].
\end{split}
 \end{equation*}

 Finally, choosing a smaller $\varepsilon =C(a, d_{\Om} )>0$ (if necessary) such that
$$1+\frac{2-a}{a^{2}}\cdot d_{\Om}^{\frac{2}{a}-2}(\frac{1}{\varepsilon})^{\frac{2}{a}}\leq 0,$$
  we have
  $$ F[W]\leq 0 \ \ \text{in} \ \ \Om .$$
  By this and (3.4),  we have proved that $W$ is an super-solution to problem (1.1).
      By comparison principal  we have
$$0\leq u\leq W   \ \ \text{in} \ \ \Om.$$
In particular,  for all $(\mathbf{0},x_{n})\in \Om $ we have
  \beq \label{3.9}
 0\leq u(\mathbf{0},x_{n})\leq W(\mathbf{0},x_{n}) =(\frac{1}{\varepsilon(a, \eta, d_{\Om}, n))})^{\frac{1}{a}}x_{n}^{\frac{1}{a}}.
 \eeq

Note that $ d_{y}=y_n$ and $y=(0, y_n)$ by the choice of the coordinate in the beginning. It follows from (3.9) that
 $$0\leq u(y) \leq(\frac{1}{\varepsilon(a, \eta,  d_{\Om}, n))})^{\frac{1}{a}}d_{y}^{\frac{1}{a}}, $$
 which proves (3.1) and thus completes the proof of Theorem 3.1.
\end{proof}

\begin{Theorem} \label {3.2}
   If $\Om$ is a general bounded convex domain  and $u$ is the concave solution to problem (1.1), then $u\in C^{\frac{1}{n+1}}(\overline{\Om})$ and
 \beq \label{3.10} [u]_{C^{\frac{1}{n+1}}(\overline{\Om})}\leq 2(n+1)^2d_{\Om}^{\frac{1}{n+1}}.\eeq
  \end{Theorem}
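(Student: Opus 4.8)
The plan is to follow the template already used for Theorems 2.6 and 3.1: reduce the global Hölder bound to a pointwise boundary-decay estimate via Lemma 2.4, and establish that estimate by comparing $u$ with an explicit one-dimensional super-solution adapted to a supporting hyperplane. By Lemma 2.4 it suffices to prove $|u(y)|\le M\,d_y^{1/(n+1)}$ for every $y\in\Om$, with $M=C(n)\,d_\Om^{\,n/(n+1)}$. Fixing $y$, I pick a nearest boundary point $z$ with $\mathrm{dist}(y,z)=d_y$; using the invariance of (1.1) under translations and rotations I place $z=0$ and the segment $zy$ along the $x_n$-axis. Since $\Om$ is convex, the hyperplane through $z$ orthogonal to $y-z$ supports $\Om$, so in these coordinates $\Om\subseteq\{x_n\ge 0\}$. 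Because $0\in\partial\Om$ and $\mathrm{diam}\,\Om=d_\Om$, every point of $\overline\Om$ satisfies $0\le x_n\le d_\Om$.

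The delicate point is the choice of barrier. For a function of $x_n$ alone the operator reduces to $F[W]=W''/(1+(W')^2)+n/W$, and a short computation shows the pure power $Cx_n^{1/(n+1)}$ is exactly borderline: its top-order terms cancel because $(n+1)\cdot\frac{1}{n+1}-1=0$, leaving $F>0$, so a pure power is a sub-solution and will not do. I would therefore use the corrected profile
$$W(x)=\big(R^{n+1}-(R-x_n)^{n+1}\big)^{\frac{1}{n+1}},$$
which still satisfies $W\sim((n+1)R^n x_n)^{1/(n+1)}$ as $x_n\to 0$. Writing $\phi=R^{n+1}-(R-x_n)^{n+1}$ and differentiating, the top-order terms again cancel, and one is left with
$$W''+\frac{n}{W}\big(1+(W')^2\big)=n\,\phi^{-\frac{n}{n+1}}\Big[\phi^{\frac{n-1}{n+1}}-(R-x_n)^{n-1}\Big].$$
Since $1+(W')^2>0$, for $n\ge 2$ this gives $F[W]\le 0$ exactly when $\phi\le (R-x_n)^{n+1}$, i.e. when $R^{n+1}\le 2(R-x_n)^{n+1}$. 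This holds for all $x_n\in[0,d_\Om]$ as soon as $R\ge d_\Om/(1-2^{-1/(n+1)})$, which I take as the definition of $R$; thus $R$ is comparable to $d_\Om$.

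With this calibration $W$ is real, concave and nonnegative on $\{0\le x_n\le d_\Om\}\supseteq\overline\Om$, so $W\ge 0=u$ on $\partial\Om$, while $F[W]\le 0=F[u]$ in $\Om$. The comparison principle (invoked as in Theorems 2.6 and 3.1, using that the zeroth-order term $n/u$ is decreasing in $u$) then yields $0\le u\le W$ in $\Om$. Evaluating at $y=(\mathbf 0,d_y)$ and using the elementary convexity bound
$$R^{n+1}-(R-d_y)^{n+1}=\int_{R-d_y}^{R}(n+1)s^{n}\,ds\le (n+1)R^{n}d_y,$$
I obtain $u(y)\le W(\mathbf 0,d_y)\le\big((n+1)R^{n}\big)^{1/(n+1)}d_y^{1/(n+1)}$, which is the required pointwise estimate with $M=\big((n+1)R^{n}\big)^{1/(n+1)}=C(n)\,d_\Om^{\,n/(n+1)}$. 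Feeding this into Lemma 2.4 upgrades it to the global estimate (3.10), with a constant depending only on $n$ and $d_\Om$.

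I expect the main obstacle to be the barrier construction itself. Because the sharp exponent $1/(n+1)$ makes the pure power exactly critical, the super-solution inequality survives only through the precise algebraic cancellation displayed above, and one must simultaneously ensure $F[W]\le 0$ throughout the entire slab $\{0\le x_n\le d_\Om\}$, not merely near the boundary; this global requirement is exactly what forces the calibration $R\gtrsim d_\Om$. Once the barrier is in place the rest is routine: the boundary comparison, the comparison principle, and the passage through Lemma 2.4 are all as in the earlier cases, and tracking the constant gives a bound of the form stated in (3.10).
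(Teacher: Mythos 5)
Your proposal is correct, and it follows the paper's overall skeleton---reduction to the pointwise decay $|u(y)|\le M\,d_y^{1/(n+1)}$ via Lemma 2.4, coordinates in which the supporting hyperplane at the nearest boundary point is $\{x_n=0\}$, a one-dimensional supersolution, and the comparison principle---but the barrier itself, which is the real content of this proof, is genuinely different. The paper takes $U(x)=(n+1)^2x_n^{1/(n+1)}-x_n^{2-1/(n+1)}$, breaking the criticality of the pure power by subtracting a higher-order power; this forces the normalization $d_{\Om}\le 1$ (via the rescaling $\widetilde x=x/d_{\Om}$, $\widetilde u=u/d_{\Om}$), after which $U(1+|\nabla U|^2)F[U]=UU_{nn}+n(1+U_n^2)\le -6n^2+3n+2<0$ and one gets the explicit constant $M=(n+1)^2$. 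You instead use $W=\bigl(R^{n+1}-(R-x_n)^{n+1}\bigr)^{1/(n+1)}$, the natural $(n+1)$-power analogue of the exact ball solution of Lemma 2.5 (for exponent $2$ it is exactly the barrier of Theorem 2.6), and you replace the normalization of the diameter by the calibration $R\ge d_{\Om}/(1-2^{-1/(n+1)})$. Your computation checks out: the top-order terms cancel, $W''+\frac{n}{W}(1+(W')^2)=n\,\phi^{-\frac{n}{n+1}}\bigl[\phi^{\frac{n-1}{n+1}}-(R-x_n)^{n-1}\bigr]$, and for $n\ge 2$ the sign condition is precisely $R^{n+1}\le 2(R-x_n)^{n+1}$, which your choice of $R$ guarantees on the whole slab; your invocation of the comparison principle is at the same level of rigor as the paper's. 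What each approach buys: yours is closed-form, geometric, avoids the rescaling step, and yields the scaling-consistent estimate $u(y)\le((n+1)R^n)^{1/(n+1)}d_y^{1/(n+1)}$ with $R$ comparable to $d_{\Om}$; the paper's two-term ansatz produces the literal constant appearing in (3.10). The one caveat is constant bookkeeping: through Lemma 2.4 your argument gives $|u|_{C^{1/(n+1)}(\overline{\Om})}\le C(n)\,d_{\Om}$, which implies the stated bound $2(n+1)^2d_{\Om}^{1/(n+1)}$ when $d_{\Om}\le 1$ but not literally when $d_{\Om}>1$; however, the paper is on the same footing, since its constant is derived under the normalization $d_{\Om}\le 1$ and undoing that rescaling turns $d_{\Om}^{1/(n+1)}$ into $d_{\Om}^{n/(n+1)}$. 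So you prove the theorem with a constant depending only on $n$ and $d_{\Om}$, exactly as the paper effectively does.
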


 \begin{proof}
 This is the   result (i) of   Theorem 1.1 except for a different H\"older norm, but our argument provides another proof.

 To prove (3.10),     as the arguments between (3.1) and (3.2), we take any $y\in \Om$ and choose  $z\in \pom$ such that $dist(y,z)=d_{y}.$
 Since the domain $\Om$ is  convex,  we may assume $z=0$, and take the line determined by $z$ and $y$
  as the $x_{n}-axis$ such that $\Om \subseteq R_{+}^n$. Without loss of generality, we assume $d_{\Om}\leq 1$. Otherwise, we can use the transform
$$\widetilde{x} =\frac{x}{d_{\Om} },  \ \  \widetilde{u}(\widetilde{x})=\frac{u(x)}{d_{\Om}}$$
to arrive at the assumed case.
  Let $$U(x)=(n+1)^2x_n^{\frac{1}{n+1}}-x_n^{2-\frac{1}{n+1}}.$$
 Then $$U(1+|\nabla U|^2)F[U]=UU_{nn}+n(1+U_n^2).$$
  By direct computation we have
 \begin{equation*}
 \begin{split}
  U_n&=(n+1)x_n^{\frac{1}{n+1}-1}-(2-\frac{1}{n+1})x_n^{1-\frac{1}{n+1}} \\
  U_{nn}&=-nx_n^{\frac{1}{n+1}-2}-(2-\frac{1}{n+1})(1-\frac{1}{n+1})x_n^{-\frac{1}{n+1}} .
\end{split}
 \end{equation*}
 Since $d_{\Om}\leq 1$ and $x_n\in [0, 1]$,  we have
 $$U> 0 \ \ on \ \ \pom$$ and
 \begin{equation*}
 \begin{split}
  U_n^2&\leq(n+1)^2 x_n^{\frac{-2n}{n+1}}- 4n+2\\
 U U_{nn}&\leq -n(n+1)^2 x_n^{\frac{-2n}{n+1}}- 2n^2+2.
\end{split}
 \end{equation*}
 Hence, $$U(1+|\nabla U|^2)F[U]\leq  -6n^2+3n+2<0$$
 which implies $U$ is a supper-solution to problem (1.1). Therefore
 $$0\leq u(y)\leq U(y)\leq  (n+1)^2 x_n^{\frac{1}{n+1}}= (n+1)^2(d_y)^{\frac{1}{n+1}},$$
 which, together with Lemma 2.4, implies (3.10).
\end{proof}
\vskip 0.5cm
{\bf Proof of (i) of Theorem 1.2:} \  When $a\in (2, n+1]$, Theorem 3.1 is   Theorem 1.2 exactly.  When $a\in (n+1, +\infty]$,
 Theorem 1.2   follows from  Theorem 3.2.

\section{ Proof of  Theorem 1.3}

In this section, we assume that $u$ is a concave solution to problem (1.1) and $x_{0}\in\pom$ is $(a, \eta)$- type point with $a\in [1,2)$ satisfying (1.5).  We are going to  prove Theorem 1.3.

Since $(1, \eta)$- type point is of course $(1+\varepsilon, \eta(\varepsilon))$- type point for any $\varepsilon>0$,
it is sufficient to prove (1.6) for any $a\in (1, 2)$.  Hence, from now on we assume $a\in (1, 2)$.

We will choose a  small positive number  $A$ such
that
\beq \label{4.1}A\leq  \eta^{\frac{1}{a-1}}d_{\Omega}.
\eeq
    Let
$$\widetilde{x}=A\frac{x}{d_{\Omega} } , \ \ \widetilde{u}(\widetilde{x})=A\frac{u(x)}{d_{\Omega} }.$$
We use $\widetilde{\Om}$ to denote the image of $\Om$
under this transform. It is easy to check $\widetilde{u}$ is the solution
to problem (1.1) in the domain  $\widetilde{\Om}$ which satisfies 
  \beq \label{4.2}  d_{\widetilde{\Omega}} \leq A.\eeq 
   Since $x_{0}=0\in\pom$, by  the convexity and (4.2) we see that
$\widetilde{\Om}\subseteq\Om$.  Note that the surface $x_{n}=\eta|x'|^{a}$ is transformed to the surface
$$\widetilde{x_{n}}=\eta(\frac{d_{\Omega}}{A})^{a-1}|\widetilde{x'}|^{a}.$$
 Then by (4.1)we have
\beq \label {4.3}\widetilde{\Om}\subseteq \{(x', x_{n} )\in R^n| \eta(\frac{d_{\Omega}}{A})^{a-1}|x'|^{a}
\leq x_{n}\leq A\}\subseteq\{(x', x_{n} )\in R^n| |x'|^{a} \leq x_{n}\leq A\}.
\eeq

 For brevity,  we denote  $\widetilde{u}$ and $\widetilde{\Om}$ still by $ u$ and $ \Om$, respectively, in the following.
Let
\beq \label{4.4} W(x)=((x_{n})^{\frac{2}{a}}-x_{1}^{2}-...-x_{n-1}^{2})^{\frac{1}{b}}
\eeq
where $b\in(2, 3)$ can be arbitrary constant.  We will prove $W$ is a super-solution to problem (1.1) in $\Om$.

Taking $\varepsilon=1$ in (3.3) we obtain the function as in (4.4). Hence have (3.7)-(3.8) where $\varepsilon=1$.
In this case, by (4.4) we have
 \begin{equation*}
 \begin{split}
J_{1}&=\frac{4(n+1-b))}{b^{2}}W^{1-2b}\cdot r^{2}\\
J_{2}&= \frac{2-2n}{b}W^{1-b}\\
J_{3}&=\frac{8(1-n)}{a^{2}b^{3}}W^{3-3b}\cdot x_{n}^{\frac{4}{a}-2}\\
J_{4}&=\frac{4(n+1-b))}{a^{2}b^{2}}W^{1-2b}\cdot x_{n}^{\frac{4}{a}-2}\\
J_{5}&=\frac{2(2-a)}{a^{2}b}W^{1-b}\cdot x_{n}^{\frac{2}{a}-2}\\
J_{6}&=\frac{8(2-a)}{a^{2}b^{3}}W^{3-3b}\cdot r^{2}\cdot x_{n}^{\frac{2}{a}-2}\\
J_{7}&=(n-2)(\frac{-8}{b^{3}})W^{3-3b}\cdot r^{2}\\
J_{8}&=nW^{-1}.
\end{split}
 \end{equation*}
Recalling that  $a\in (1,2)$, $b\in (2,3)$, $|x'|^{a} \leq x_{n}\leq A$ in $\Om$ by (4.3) and
$W^b(x)=x_{n}^{\frac{2}{a}}-|x'|^{2}$ by (4.4),
we obtain
\begin{equation*}
 \begin{split}
J_{1}&\leq\frac{4(n+1-b)}{b^{2}}W^{1-2b}\cdot x_{n}^{\frac{2}{a}}\\
&=\frac{4(n+1-b)}{b^{2}}W^{b-2}\cdot W^{3-3b}\cdot x_{n}^{\frac{4}{a}-2}\cdot x_{n}^{-\frac{2}{a}+2}\\
J_{2}&\leq 0\\
J_{3}&=\frac{8(1-n)}{a^{2}b^{3}}W^{3-3b}\cdot x_{n}^{\frac{4}{a}-2}\\
J_{4}&=\frac{4(n+1-b)}{a^{2}b^{2}}W^{b-2}\cdot W^{3-3b}\cdot x_{n}^{\frac{4}{a}-2}\\
J_{5}&=\frac{2(2-a)}{a^{2}b}W^{1-2b}\cdot W^{b}\cdot x_{n}^{\frac{2}{a}-2}\\
      &\leq\frac{2(2-a)}{a^{2}b}W^{1-2b}\cdot x_{n}^{\frac{4}{a}-2}\\
      &=\frac{2(2-a)}{a^{2}b}W^{b-2}\cdot W^{3-3b}\cdot x_{n}^{\frac{4}{a}-2}
\end{split}
 \end{equation*}

 \begin{equation*}
 \begin{split}
 J_{6}&\leq\frac{8(2-a)}{a^{2}b^{3}}W^{3-3b}\cdot x_{n}^{\frac{4}{a}-2}\\
J_{7} &\leq 0\\
J_{8}&= nW^{3-3b}\cdot W^{b(2-a)}\cdot W^{(a+1)b-4}\\
     &\leq nW^{3-3b}\cdot x_{n}^{\frac{4}{a}-2}\cdot W^{(a+1)b-4}.
\end{split}
 \end{equation*}

Hence, in $\Om$ we have
\begin{equation*}
 \begin{split}
J_{3}+J_{6}&\leq\frac{-8(n+a-3)}{a^{2}b^{3}}W^{3-3b}\cdot x_{n}^{\frac{4}{a}-2}\\
I+J&=J_{1}+J_{2}+J_{3}+J_{4}+J_{5}+J_{6}+J_{7}+J_{8}\\
&\leq J_{1}+J_{3}+J_{4}+J_{5}+J_{6}+J_{8}\\
&\leq [ \frac{4(n+1-b)}{b^{2}}W^{b-2} x_{n}^{-\frac{2}{a}+2}-\frac{8(n+a-3)}{a^{2}b^{3}}+
\frac{4(n+1-b)}{a^{2}b^{2}}W^{b-2}\\
& \ +\frac{2(2-a)}{a^{2}b}W^{b-2}
  +nW^{(a+1)b-4}] \cdot W^{3-3b}\cdot x_{n}^{\frac{4}{a}-2}\\
&\leq[ \frac{4(n+1-b)}{b^{2}}W^{b-2} A^{-\frac{2}{a}+2}+\frac{-8(n+a-3)}{a^{2}b^{3}}+
\frac{4n+4-2ab}{a^{2}b^{2}}W^{b-2}\\
&\ +nW^{b-2+ab-2}]
 \cdot W^{3-3b}\cdot x_{n}^{\frac{4}{a}-2}\\
 &=\{W^{b-2}\cdot[\frac{4(n+1-b)}{b^{2}} A^{-\frac{2}{a}+2}+
\frac{4n+4-2ab}{a^{2}b^{2}}+nW^{ab-2}]\\
& \ +\frac{-8(n+a-3)}{a^{2}b^{3}}\}
\cdot W^{3-3b}\cdot x_{n}^{\frac{4}{a}-2}\\
&:=\Phi \cdot W^{3-3b}\cdot x_{n}^{\frac{4}{a}-2}.
\end{split}
 \end{equation*}

Since
  $$0\leq W(x)\leq x_{n}^{\frac{2}{ab}}\leq A^{\frac{2}{ab}}$$
  by (4.3) and (4.4),  we obtain
\begin{equation*}
 \begin{split}
 \Phi & \leq A^{\frac{2(b-2)}{ab}}\cdot[\frac{4(n+1-b)}{b^{2}}A^{-\frac{2}{a}+2}+
\frac{4n+4-2ab}{a^{2}b^{2}}+nA^{\frac{2(ab-2)}{ab}}]+\frac{-8(n+a-3)}{a^{2}b^{3}}\\
 &= [\frac{4(n+1-b)}{b^{2}}A^{2-\frac{4}{ab}}+
\frac{4n+4-2ab}{a^{2}b^{2}}A^{\frac{2(b-2)}{ab}}+nA^{\frac{2(ab+b-4)}{ab}}]+\frac{-8(n+a-3)}{a^{2}b^{3}}.
\end{split}
 \end{equation*}
Furthermore, it follows from the fact $n\geq 2, a\in (1,2)$ and $b\in (2,3)$ that
 $$-8(n+a-3)<0,\ \ 2-\frac{4}{ab}>0,\ \ \frac{2(b-2)}{ab}>0,\ \ \frac{2(ab+b-4)}{ab}>0.$$
Therefore,  we can take $A>0$ is small enough in advance such that $\Phi \leq 0$. Hence
$$F[W]=\frac{I+J}{1+W_r^2+W_n^2}\leq 0.$$
In this way, we have proved that $W$ is a super-solution to problem (1.1) in  $\Om.$
By comparison principal, $0\leq u\leq W$.  Restricting this  on to $x_{n}-axis$, we obtain that
$$0\leq u(\mathbf{0},x_{n})\leq W(\mathbf{0},x_{n})=
x_{n}^{\frac{2}{ab}} $$
  for any $b\in (2,3)$, which implies the desired (1.6) for any $\delta\in (0, \frac{1}{2})$.
Moreover, this implies (1.6) for any $\delta\geq \frac{1}{2}$. The proof of  Theorem 1.3 is finished.





\newpage

\end{document}